\documentclass[11pt]{amsart}

\usepackage[T1]{fontenc}
\usepackage{palatino} 
\usepackage{tikz}
\usetikzlibrary{arrows.meta, decorations.markings}
\usepackage[most]{tcolorbox}

\usepackage[a4paper,margin=2.5cm]{geometry}

\usepackage{amsmath,amssymb}

\usepackage{xcolor}
\usepackage[bookmarks=true]{hyperref}
\usepackage[nameinlink]{cleveref}
\newcommand{\orcidlink}[1]{\href{https://orcid.org/#1}{\texttt{#1}}}

\theoremstyle{plain}
\newtheorem{theorem}{Theorem}[section]
\newtheorem*{theorem*}{Theorem}
\newtheorem{proposition}[theorem]{Proposition}
\newtheorem{corollary}[theorem]{Corollary}
\newtheorem{lemma}[theorem]{Lemma}

\theoremstyle{definition}
\newtheorem{definition}[theorem]{Definition}
\newtheorem{notation}[theorem]{Notation}

\theoremstyle{remark}
\newtheorem{remark}[theorem]{Remark}
\newtheorem{example}[theorem]{Example}

\renewcommand{\bar}[1]{\overline{#1}} 

\DeclareMathOperator{\Tr}{Tr}
\DeclareMathOperator{\Aut}{Aut}
\DeclareMathOperator{\cdiv}{cdiv}

\DeclareMathOperator{\ord}{ord}

\DeclareMathOperator{\Span}{span}

\newcommand{\CC}{\mathbb{C}}
\newcommand{\RR}{\mathbb{R}}
\newcommand{\ZZ}{\mathbb{Z}}
\newcommand{\NN}{\mathbb{N}}
\newcommand{\HH}{\mathbb{H}}
\newcommand{\SF}{\mathbb{S}}
\newcommand{\OO}{\mathbb{O}}

\newcommand{\Dd}{\mathcal{D}}
\newcommand{\Ss}{\mathcal{S}}
\newcommand{\Mm}{\mathcal{M}}

\newcommand{\Zz}{\mathcal{Z}}
\newcommand{\Ii}{\mathcal{I}}

\newcommand{\ii}{\sqrt{-1}} 
\newcommand{\C}{\CC}           

\begin{document}
\setlength{\parskip}{1pt}

\title{(Semi-)Models for Slice Regular Functions on Real Division Algebras}

\author[A. Altavilla]{Amedeo Altavilla}
\address{Dipartimento di Matematica, Universit\`a degli Studi di Bari Aldo Moro,
Via E. Orabona 4, 70125 Bari, Italy}
\email{amedeo.altavilla@uniba.it}
\urladdr{ORCID: \orcidlink{0000-0002-3290-371X}}
\thanks{A.\ Altavilla was partially supported by PRIN 2022MWPMAB -- ``Interactions
between Geometric Structures and Function Theories'' and by GNSAGA of INdAM.}

\author[C. Bisi]{Cinzia Bisi}
\address{Dipartimento di Matematica e Informatica, Universit\`a di Ferrara,
Via N. Machiavelli 30, 44121 Ferrara, Italy}
\email{bsicnz@unife.it}
\urladdr{ORCID: \orcidlink{0000-0002-4973-1053}}
\thanks{C.\ Bisi was partially supported by PRIN 2022 ``Variet\`a reali e complesse:
geometria, topologia e analisi armonica'' and by GNSAGA of INdAM.}

\date{\today}
\subjclass[2020]{Primary 30G35; Secondary 32A10, 17A35}
\keywords{slice regular functions, quaternions, octonions, real division algebras, isolated nonreal zeros, spherical zeros, $*$-product, $*$-conjugation, strong equivalence, weak equivalence, central divisor, algebraic models}

\begin{abstract}
We study slice regular functions on the real division algebras $\HH$ and
$\OO$ from the point of view of equivalence under automorphism and conjugation
actions. Motivated by recent orbit-theoretic descriptions in terms of the
invariants \((\Tr,N,\cdiv)\), and by the quaternionic theory of
$*$-conjugation by semiregular functions, we investigate whether a slice regular
function can be replaced by a canonical representative in its equivalence class.

The normalization considered in this paper is algebraic: we
look for representatives whose isolated nonreal zeros are aligned in a single
complex slice. We call such a representative a model when it belongs to the
strong-equivalence class of the original function. We prove that purely
vectorial functions always admit models, while in general strong equivalence is
too rigid and such representatives need not exist.

We therefore introduce semi-models, which preserve the symmetrization and the
real and spherical part of the zero set, but are allowed to leave the original
strong-equivalence class. Our main result proves that every slice regular
function \(f\) with \(N(f)\not\equiv 0\) admits a semi-model. The construction
clarifies the different roles of the invariants \((\Tr,N,\cdiv)\) and the
distinction between strong equivalence, weak equivalence, and semiregular
conjugacy.
\end{abstract}

\maketitle

\setcounter{tocdepth}{1} 
\section{Introduction}

Since its introduction \cite{gentili2006new,gentili2007new}, the theory of slice regular functions has provided a natural extension of one-variable holomorphic function theory to the quaternionic algebra $\HH$ and, more generally, to suitable hypercomplex algebras \cite{colombo2008slice,gentilistruppaCayley,ghiloni2011slice}. In this paper we work with the two real division algebras
\[
A\in\{\HH,\OO\},
\]
namely the quaternions and the octonions. Our focus is on the zero sets of slice regular functions and, more precisely, on the extent to which isolated nonreal zeros can be ``aligned'' inside a single complex slice.

One of the original motivations of the theory was to build a function theory in which polynomials and power series of the form
\[
\sum_{n\ge 0}x^n a_n
\]
are regular, while retaining a sufficiently rich analytic structure, encoded for instance by the Representation Formula and by the slice-by-slice holomorphic behavior \cite{GSSbook}. It soon became clear that slice regularity goes substantially beyond this first class of examples, and that one still has meaningful notions of analyticity, such as power and spherical analyticity \cite{gentili2008power,stoppato2012new,ghiloni2014power,altavilla2021spherical}, and more recently \cite{BisiCarbone}. At the algebraic level, the natural operation is not composition, which in general does not preserve slice regularity, but rather the $*$-product, induced by pointwise multiplication of the corresponding stem functions in the complexified algebra $A_\CC$ \cite{ghiloni2011slice}.

For the purposes of the present paper it is especially convenient to adopt the
``complex-curve'' viewpoint already implicit in
\cite[Remarks 3(2)]{ghiloni2011slice} and later developed explicitly in
\cite[Proposition 3.12]{Colombo2013} and \cite[Lemma 6.11]{GMP}.
Namely, once an orthonormal basis of $A$ is fixed, a slice regular function can
be encoded by a tuple of slice-preserving components, equivalently by an
intrinsic holomorphic curve with values in
$A_\CC\simeq\CC^{\dim_\RR A}$. From this point of view, the $*$-product
becomes pointwise multiplication in the complexified algebra, and the action
of holomorphic families of automorphisms of $A_\CC$ becomes a natural tool to
compare slice regular functions.

This complex-curve viewpoint is also motivated by recent geometric
applications of quaternionic and complex-quaternionic methods. In
\cite{AltavillaSchroeckerSirVrsekMinimalSurfaces}, isothermal minimal surfaces
are represented by holomorphic curves with values in the algebra of complex
quaternions, and their construction is reduced to a conjugation problem for
null complex-quaternionic vectors. This provides one geometric motivation for
the present paper: we investigate, in the slice regular setting and also over
the octonions, how far a function can be normalized within its automorphism or
conjugation class. A related rigidity phenomenon appears in
\cite{AltavillaSchroeckerSirVrsekPHPreserving}, where PH-preserving mappings
are characterized as conformal maps with square rational dilation. The same
general philosophy is also connected with the twistor-geometric approach to
slice regularity: in the quaternionic case, the twistor transform realizes a
slice regular function as a holomorphic curve in the Grassmannian
\(\operatorname{Gr}_2(\CC^4)\), and recent work of Liu, Moreno and Shi uses
this construction to study equivalence classes of slice regular functions and
slice regular polynomials under natural projective group actions
\cite{LiuMorenoShi2026}.

This perspective lies behind a family of recent equivalence theories for slice regular functions \cite{AltavillaLAA,Bisi2025,Bisioctonions}. In the quaternionic and octonionic settings, one finds that the relevant invariants are the trace, the symmetrization, and the so-called central divisor. More precisely, in the works of Bisi and Winkelmann on the orbit problem for holomorphic curves associated with slice regular functions \cite{Bisi2025,Bisioctonions}, if two functions share the three invariants
\[
(\Tr,N,\cdiv),
\]
then they belong to the same holomorphic orbit; equivalently, there exists a global holomorphic section of the corresponding orbit bundle over the complex base. In the quaternionic case this orbit picture can be interpreted locally in terms of zero-free regular conjugators. On the other hand, the quaternionic semiregular-conjugacy viewpoint developed in \cite{AltavillaLAA} shows a genuinely different phenomenon: if one prescribes only the two invariants
\[
(\Tr,N),
\]
then one still obtains $*$-conjugacy at the semiregular level, but in general one loses the existence of a global nowhere vanishing regular conjugator. The mismatch of the central divisor is precisely the obstruction. These invariants also arise naturally in related questions, for instance in problems concerning logarithms and factorization phenomena \cite{AltavillaLINCEI,AltavillaMongodi,GPV,GPV2}.

This naturally suggests a comparison with complex analysis. In one and several complex variables, a fruitful model theory has been developed for holomorphic self-maps, where the relevant operation is composition and the main objects encode the dynamics of the iterates; see for instance \cite{ArosioBracci2016}. In the slice regular setting, however, composition is not the natural structure to preserve regularity, whereas the $*$-product plays a partially analogous role. In the quaternionic case its evaluation formula
\[
(f*g)(x)=f(x)\,g\!\bigl(f(x)^{-1}xf(x)\bigr),
\qquad f(x)\neq 0,
\]
makes this analogy especially suggestive. Nevertheless, the present problem is not dynamical in nature. For this reason, the notion of model considered in this paper is not analytic/dynamical in the sense of iteration theory, but algebraic.

A distinctive feature of slice regularity is indeed the structure of the zero set. If $f$ is not a zero divisor with respect to the $*$-product, then its zeros split into three different types: real zeros, spherical zeros, and isolated nonreal zeros; if instead $N(f)\equiv 0$, then the zero set contains a real surface biholomorphic to $\CC^+$ \cite{Ghiloni2020}. Real and spherical zeros are comparatively rigid and manageable. In fact, when one writes a slice regular function in terms of its slice-preserving components, these zeros correspond to common zeros of those components, hence to commutative holomorphic data on the base. Spherical zeros are encoded by the characteristic polynomials
\[
\Delta_q(x)=(x-q)*(x-q^c)=x^2-x\Tr(q)+N(q),
\]
in close analogy with conjugate pairs of complex zeros \cite{GSSbook}. By contrast, isolated nonreal zeros are much more delicate: the relationship between $*$-factorizations and the actual location of zeros is governed by the camshaft effect \cite{Ghiloni2010}. It is therefore natural to ask whether, inside a suitable equivalence class, one can choose a representative with a simpler arrangement of isolated nonreal zeros.

\medskip
\noindent\textbf{Main results.}
The first notion introduced in the paper is that of a \emph{model}. We say that
a slice regular function \(g\) is a model for \(f\) if \(g\) is strongly
equivalent to \(f\) and all isolated nonreal zeros of \(g\) lie in a single
slice \(\CC_I\). Thus a model is an aligned representative inside the
strong-equivalence class of the original function. This notion is rigid enough
to preserve the invariants \((\Tr,N,\cdiv)\), but flexible enough to capture a
meaningful normalization of the isolated nonreal zero set.

Our first result gives a precise criterion for the existence of such
one-slice-preserving models. More precisely, if \(f=f_0+f_v\) is not
slice-preserving, then \(f\) is strongly equivalent to a \(\CC_I\)-preserving
function if and only if \(N(f_v)\) admits a slice-preserving square root \(h\)
whose divisor coincides with the central divisor of \(f\). In that case
\(f_0+hI\) is a model for \(f\). This criterion, proved in
Theorem~\ref{thm:criterion-one-slice-model}, shows that the existence of models
is governed by the interaction between the square-root problem for \(N(f_v)\)
and the central divisor.

In particular, models do not always exist. Even in the quaternionic polynomial
case, one can construct examples for which no strongly equivalent
one-slice-preserving representative exists; see
Example~\ref{ex:no-CI-preserving-representative}. Thus the strong-equivalence
class is in general too rigid for a complete alignment theorem.

The second main result is positive and concerns purely vectorial functions. We
prove in Theorem~\ref{thm:purely-vectorial-model} that every purely vectorial
slice regular function admits a model. In this case the obstruction described
above disappears: the norm of the vector part can be represented by two
slice-preserving components, and this makes it possible to align all isolated
nonreal zeros in one fixed complex slice.

The bridge from the purely vectorial case to the general case is the
two-direction reduction of Proposition~\ref{prop:two-directions}. It states
that every slice regular function \(f=f_0+f_v\) is strongly equivalent to a
function of the form
\[
f'=f_0+h_1 i+h_2 j,
\]
where \(i,j\) generate a fixed quaternionic subalgebra of \(A\) and \(h_1,h_2\)
are slice-preserving functions. Equivalently, after a strongly equivalent
change of representative, the associated stem function takes values in the
complexification of a fixed three-dimensional subspace
\[
\CC\oplus \CC i\oplus \CC j
\subset A_\CC .
\]
This reduction is the step which allows us to treat the quaternionic and
octonionic cases simultaneously.

For general slice regular functions, the correct alignment statement requires a
weaker notion, which we call a \emph{semi-model}. A semi-model is not required
to be strongly equivalent to the original function, but it preserves the
symmetrization and the real and spherical part of the zero set, while forcing
all isolated nonreal zeros to lie in one slice. In this sense, semi-models
preserve most of the information about zeroes that are relevant to the alignment problem,
while avoiding the central-divisor obstruction.

The main theorem of the paper is the following.

\begin{theorem*}[Existence of semi-models]
Let \(\Omega=\Omega_D^A\) be a basic domain and let \(f\in\Ss(\Omega)\) satisfy
\(N(f)\not\equiv 0\). Then \(f\) admits a semi-model \(\widehat f\). More
precisely,
\[
N(\widehat f)=N(f),
\]
the real and spherical zeros of \(\widehat f\) coincide with those of \(f\), with
the same multiplicities, and all isolated nonreal zeros of \(\widehat f\) are
contained in a single complex slice.
\end{theorem*}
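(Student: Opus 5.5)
The plan is to build the semi-model directly as a $\CC_I$-preserving function $\widehat f=\Delta\,(a+bI)$. Here $\Delta$ will be slice-preserving and will carry exactly the real and spherical zeros of $f$, and $a,b$ will be slice-preserving with no common zeros and satisfy $\Delta^2(a^2+b^2)=N(f)$. Throughout, slice-preserving functions on $\Omega_D^A$ are identified with intrinsic holomorphic functions $\phi$ on $D$, i.e.\ those with $\phi^c(z):=\overline{\phi(\bar z)}=\phi(z)$.

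\emph{Step 1: splitting off the real and spherical zeros.} By Proposition~\ref{prop:two-directions}, $f$ is strongly equivalent to $f'=f_0+h_1i+h_2j$ with $h_1,h_2$ slice-preserving. Strong equivalence preserves $N$ and the real and spherical zeros with their multiplicities. So we may replace $f$ by $f'$, and then $N(f)=f_0^2+h_1^2+h_2^2$. The real and spherical zeros of $f'$ are the common zeros of the intrinsic functions $f_0,h_1,h_2$, counted with the minimum multiplicity. Using the Weierstrass theorem on $D$ (applied symmetrically, pairing each zero with its conjugate), we choose an intrinsic $\Delta$ with exactly this divisor. We then write $f_0=\Delta g_0$ and $h_k=\Delta g_k$, where $g_0,g_1,g_2$ have no common zeros. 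Set $F:=g_0^2+g_1^2+g_2^2=N(f)/\Delta^2$, which is intrinsic and not identically zero. On $D\cap\RR$ we have $F(x)=|\tilde f(x)|^2>0$, where $\tilde f=g_0+g_1i+g_2j$ has no real zeros. Hence every zero of $F$ is nonreal, and the zero set is symmetric under $z\mapsto\bar z$.

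\emph{Step 2: the factorization $F=GG^c$.} We look for a holomorphic $G$ on $D$, not necessarily intrinsic, with $GG^c=F$ and with all zeros in the upper half plane $D^+$. Given such a $G$, we put $a=(G+G^c)/2$ and $b=(G-G^c)/(2\ii)$. These are intrinsic, satisfy $a^2+b^2=GG^c=F$, and have no common zeros, because $a+\ii b=G$ vanishes only in $D^+$ while $a-\ii b=G^c$ vanishes only in $D^-$. To construct $G$, first take $G_0$ by Weierstrass with divisor equal to the zeros of $F$ in $D^+$. Then $u:=F/(G_0G_0^c)$ is zero-free and intrinsic, and it remains to find $\psi$ with $e^{\psi+\psi^c}=u$, after which $G=G_0e^{\psi}$.

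\emph{Step 3: the main obstacle.} Step 3 is the topological problem of finding such a $\psi$, since $D$ need not be simply connected. The periods $\frac1{2\pi\ii}\oint_\gamma du/u$ over loops in $D$ must be of the form $p(\gamma)+p(\bar\gamma)$ for an integer-valued period map $p$, and this can fail. My first attempt is to exploit the freedom in distributing each conjugate pair of zeros $\{z,\bar z\}$ between $G$ and $G^c$. Moving a zero from $z$ to $\bar z$ changes the winding of $u$ around loops separating $z$ from $\bar z$ by $\pm2$. This should allow all periods to be adjusted into admissible form, with the verification organized by loops in $D$ and their conjugates. If a zero is moved into $D^-$, the corresponding zero of $a+bI$ lies on the opposite half of the same slice $\CC_I$, which is harmless. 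In the worst case, I expect to need the leftover parity obstruction to vanish by an argument using $u>0$ on $D\cap\RR$ (slice domain case). In the product-domain case there is no reality constraint, and one can define $G$ freely on each of the two conjugate components of $D$.

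\emph{Step 4: conclusion.} Set $\widehat f=\Delta(a+bI)$. Then $N(\widehat f)=\Delta^2(a^2+b^2)=N(f)$. The real and spherical zeros of $\widehat f$ are those of $\Delta$, because $a,b$ have no common zeros, and these agree with those of $f$ with the same multiplicities. Finally, $a+bI$ is $\CC_I$-preserving, so its isolated nonreal zeros, namely the zeros of $G$ read on $\CC_I$, all lie in the single slice $\CC_I$.
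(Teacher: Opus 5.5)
Your construction is sound, but as written it is not a proof at exactly the point you call the main obstacle. Step~3 only offers a heuristic (redistributing conjugate zeros, an expected vanishing of a parity obstruction); it never actually produces a zero-free $H$ with $HH^c=u$.

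The missing observation is that $u$ is \emph{positive} on $D\cap\RR$: there $F>0$, and $G_0G_0^c=|G_0|^2>0$ because $G_0$ has no real zeros. With this, no period bookkeeping is needed.
\begin{itemize}
\item If $D$ is simply connected (the paper's reading of ``basic'' when $D\cap\RR\neq\emptyset$), pick a logarithm $L$ of $u$ that is real at some $x_0\in D\cap\RR$. Then $L^c$ is also a logarithm of $u^c=u$, so it differs from $L$ by a constant in $2\pi\ii\ZZ$. Since $L^c(x_0)=L(x_0)$, we get $L^c=L$, and $\psi:=L/2$ works.
\item If you only assume $D^+$ simply connected, take a branch of $\sqrt u$ on $D^+$. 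Near each real point it equals $\pm$ a local square root that is real on the real axis, so it extends across $D\cap\RR$ with real values. Schwarz reflection then gives an intrinsic $H$ on all of $D$ with $H^2=u=HH^c$.
\end{itemize}
This is exactly the step ``$S$ admits an intrinsic square root $\sigma$'' in the proof of Lemma~\ref{lem:symmetrization-two-squares}. Your $G=G_0H$ is the restriction to $\CC_I$ of the paper's $h=\sigma*\prod_\lambda(x-q_\lambda)^{*m_\lambda}$.

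Drop the redistribution idea. Besides being unnecessary, moving only part of a multiple zero from $z$ to $\bar z$ would make $G$ and $G^c$ vanish at the same point. That creates a common zero of $a,b$, hence a spurious spherical zero of $\widehat f$. Your product-domain case is fine, e.g.\ $G:=F$ on $D^+$ and $G:=1$ on $D^-$.

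Once Step~3 is filled in this way, your proof is correct and more direct than the paper's. The paper proceeds as follows:
\begin{itemize}
\item it passes to the strongly equivalent $f'=f_0+h_1i+h_2j$ of Proposition~\ref{prop:two-directions};
\item it forms the purely vectorial $v=f'*k$ and splits $v=d\,w$ with $\cdiv(w)=0$;
\item it replaces $w$ by its model from Theorem~\ref{thm:purely-vectorial-model}, namely $h\,e_2$ with $h\in\Ss_{e_1}(\Omega)$ given by Lemma~\ref{lem:symmetrization-two-squares}.
\end{itemize}
Its semi-model is the purely vectorial function $d\,h\,e_2$.

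Your $\Delta$ coincides with $d$, and your $a+bI$ (with $I=e_1$) plays the role of $h$. So, up to the non-canonical choices involved, the paper's semi-model is yours multiplied on the right by $e_2$.

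Your route needs neither the purely vectorial model theorem nor the passage through $f'*k$. Even your appeal to Proposition~\ref{prop:two-directions} in Step~1 is superfluous. The common divisor $\Delta$ of all $n+1$ components of $f$ already carries its real and spherical zeros, and $N(f)/\Delta^2$ is a sum of squares of components without common zeros, which is all Step~2 uses.

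The paper's route instead organizes the construction through strong-equivalence steps and produces a purely vectorial semi-model. That fits its discussion of the roles of the invariants $(\Tr,N,\cdiv)$.
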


The construction first uses
the two-direction reduction to pass to a representative whose vector part lies
in a fixed quaternionic subalgebra, and then applies the purely vectorial model
theorem after separating the central divisor. In this way every slice regular
function with nontrivial symmetrization admits an aligned representative in a
weaker sense, but 
natural from the algebraic point of view.

The construction also clarifies the different roles of the three invariants.
The trace controls the scalar part; the symmetrization controls the zero
spheres; and the central divisor records the common real and spherical zeros of
the vector part. In particular, the paper makes transparent why the orbit theory
based on \((\Tr,N,\cdiv)\) and the quaternionic weak-equivalence theory based
only on \((\Tr,N)\) have genuinely different content: the former yields a
holomorphic global section of the automorphism orbit bundle, whereas the latter
yields semiregular conjugacy, but in general not a global zero-free regular
conjugator.

The paper is organized as follows. In Section~\ref{sec:equivalence} we recall
the automorphism action on stem functions and the notions of weak and strong
equivalence, emphasizing the difference between global holomorphic sections and
semiregular conjugators. In Section~\ref{sec:models} we introduce models and
semi-models, prove the criterion for one-slice-preserving models, establish the
existence theorem for purely vectorial models, prove the two-direction
reduction, and finally obtain the general semi-model theorem. The last part of
the paper is devoted to explicit quaternionic and octonionic examples
illustrating the gap between models and semi-models and the role of the central
divisor.

\section{Preliminaries}
\subsection{The algebras $\mathbb H$ and $\mathbb O$}
The algebra of octonions is the $8$-dimensional real algebra
\[
\mathbb{O}
=
\left\{
x_0+x_1 e_1+x_2 e_2+x_3 e_3+x_4 e_4+x_5 e_5+x_6 e_6+x_7 e_7 \,|\, x_\ell\in\mathbb{R}
\right\},
\]
where $\{1,e_1,\dots,e_7\}$ is a basis over $\mathbb{R}$, the unit element is $1$, and the imaginary units $e_1,\dots,e_7$ satisfy
\[
e_i^2=-1,
\qquad i=1,\dots,7.
\]
The multiplication is determined by bilinearity together with the rule of the oriented Fano plane in Figure~\ref{fig:fano}: if $(e_i,e_j,e_k)$ is an oriented triple on one of the seven lines, then
\[
e_i e_j = e_k,
\qquad
e_j e_i = -e_k.
\]
In particular, $\mathbb{O}$ is noncommutative and nonassociative, but alternative.

\tikzset{
    midarrow/.style={
        postaction={decorate,
            decoration={
                markings,
                mark=at position #1 with {\arrow{Stealth[scale=1.5]}}
            }
        }
    },
    midarrow/.default=0.5
}
 
\begin{figure}[h]
\centering
\begin{tikzpicture}[
    scale=2,
    every node/.style={circle, draw, fill=white, minimum size=10pt, inner sep=0pt, font=\small}
]
 
    \coordinate (e7) at (0, 1.732);      
    \coordinate (e6) at (-1, 0);         
    \coordinate (e5) at (1, 0);          
    \coordinate (e1) at (-0.5, 0.866);   
    \coordinate (e2) at (0.5, 0.866);    
    \coordinate (e3) at (0, 0);          
    \coordinate (e4) at (0, 0.577);      
 
    \draw[midarrow=0.3, midarrow=0.8] (e6) -- (e1) -- (e7);
 
    \draw[midarrow=0.3, midarrow=0.8] (e7) -- (e2) -- (e5);
 
    \draw[midarrow=0.3, midarrow=0.8] (e5) -- (e3) -- (e6);
 
    \draw[midarrow=0.2, midarrow=0.8] (e3) -- (e4) -- (e7);
 
    \draw[midarrow=0.2, midarrow=0.8] (e1) -- (e4) -- (e5);
 
    \draw[midarrow=0.2, midarrow=0.8] (e2) -- (e4) -- (e6);
 
    \draw[midarrow=0.17, midarrow=0.5, midarrow=0.83]
        (e4) ++(30:0.577) arc[start angle=30, end angle=-330, radius=0.577];
 
    \node at (e7) {$e_7$};
    \node at (e6) {$e_6$};
    \node at (e5) {$e_5$};
    \node at (e1) {$e_1$};
    \node at (e2) {$e_2$};
    \node at (e3) {$e_3$};
    \node at (e4) {$e_4$};
 
\end{tikzpicture}
\caption{The Fano plane with oriented lines encoding the multiplication
of the imaginary units $e_1,\dots,e_7$ of $\OO$. Each directed triple
$e_i \to e_j \to e_k$ on a line gives $e_i e_j = e_k$.}
\label{fig:fano}
\end{figure}
With the above choice, we identify the quaternions $\HH$ with the subalgebra (linearly) generated by $1, e_1,e_2, e_3$.
Accordingly, when convenient, we write
\(e_1=i, e_2=j\),  and \(e_3=k\).

\begin{notation}   
Throughout the paper, $A$ denotes either the quaternions $\HH$ or the octonions $\OO$. 
\end{notation}
We endow $A$ with standard conjugation $$x=x_0+\sum_{\ell=1}^n x_\ell e_\ell\mapsto x^c=x_0-\sum_{\ell=1}^n x_\ell e_\ell,$$
for suitable $n=3,7$.
We set
\[
\Tr(x):=x+x^c,\qquad N(x):=xx^c=x^c x,\qquad
\Re(x):=\frac{\Tr(x)}2=x_0,\qquad \Im(x):=\frac{x-x^c}{2}.
\]
Thus
\[
A=\RR\oplus \Im(A),\qquad x=\Re(x)+\Im(x),\qquad x^c=\Re(x)-\Im(x).
\]
The imaginary part $\Im(x)$ is sometimes denoted as $x_v$ and the writings
$x=x_0+x_v$ is called the \emph{scalar-vector notation}. 
If $x\neq 0$, then $x^{-1}=x^c/N(x)$, and the squared norm is multiplicative:
\[
N(xy)=N(x)N(y).
\]

We set
\[
\mathbb S_A:=\{I\in A:\ I^2=-1\}=\{I\in\Im(A):N(I)=1\},
\qquad
\CC_I:=\Span_\RR\{1,I\}\subset A,
\]
and we also write \(\CC_I^+:=\{x+yI:\ x\in\RR,\ y>0\}\).
Therefore, $\SF_\HH$ is a $2$-sphere, while $\SF_\OO$ is a $6$-sphere, while the algebra $A$ can be viewed as an $\SF_A$-union of complex planes:
$$A=\bigcup_{I\in\SF_A}\CC_I$$.

On $\Im(A)$ we use the Euclidean inner and cross products, namely
\[
\langle u,v\rangle=-\Re(uv),\qquad u\times v:=\frac12(uv-vu),\qquad u,v\in\Im(A),\]
and therefore, if $x=x_0+x_v,y=y_0+y_v\in A$, then
$$xy=x_0y_0-\langle x_v,y_v\rangle+x_0y_v+y_0x_v+x_v \times y_v.$$

Let $\ii\in\CC$ be the standard imaginary unit. The complexification of $A$ is
\[
A_\CC:=A\otimes_\RR\CC.
\]
Each $z\in A_\CC$ can be written uniquely as $z=a+\ii b$, with $a,b\in A$.
We denote by $z^c:=a^c+\ii b^c$ the $\CC$-linear extension of the conjugation of $A$, and by
\[
\overline z:=a-\ii b
\]
the complex conjugation. We keep the notation
\[
\Tr(z):=z+z^c,\qquad N(z):=zz^c.
\]
Recall that $z\in A_\CC$ is invertible if and only if $N(z)\neq 0$, in which case \(z^{-1}=\frac{z^c}{N(z)}\).
\begin{remark}
Although $A$ is a division algebra, its complexification $A_\CC$ is not. Indeed, every nonzero element
$z\in A_\CC$ such that $N(z)=0$ is a zero divisor. In particular, for any $I\in\mathbb S_A$, the element
\(E:=\frac{1+\ii I}{2}\)
is an idempotent:
\[
E^2=E,
\qquad
N(E)=EE^c=0.
\]
\end{remark}


\subsection{Slice Regularity}
We will define and discuss slice regularity using the recent approach developed in~\cite{AltavillaLAA,AM:powercover} based on that of \textit{stem functions} to~\cite{ghiloni2011slice}.
For the classical direct approach we refer to~\cite{GSS:RegFunc}.

\begin{definition}
Let $\Dd\subset\CC$ be an \emph{open set} such that $\overline{\Dd}=\Dd$.
The \emph{circularization} of $\Dd$ in $A$ is the set
\[
\Omega_\Dd^A
:=\{\alpha+\beta I\in A\,|\,\alpha+\ii\beta\in\Dd,\, I\in\mathbb S_A\}.
\]
\end{definition}
\begin{remark}\label{rem:base-projection}
Whenever $\Omega=\Omega_\Dd^A$, we write $D_\Omega:=\Dd$ and we denote by
\[
\pi:\Omega\longrightarrow \Dd,\qquad \pi(\alpha+\beta I)=\alpha+\ii\beta,
\]
the natural projection onto the base.
\end{remark}

\begin{definition}
Let $\Omega\subset A$ be a domain. We say that $\Omega$ is \emph{circular} (or \emph{axially symmetric})
if for every $q=\alpha+\beta I\in\Omega$ (with $\alpha,\beta\in\RR$, $I\in\mathbb S_A$) the whole sphere
\[
\mathbb S_q:=\{\alpha+\beta J\,|\, J\in\mathbb S_A\}
\]
is contained in $\Omega$.
\end{definition}

\begin{definition}\label{def:basic-domain}
Let $\Omega\subseteq A$ be a circular domain and assume $\Omega=\Omega_\Dd^A$.
We say that $\Omega$ is a \emph{basic domain} if, for every $J\in\mathbb S_A$, the slice
\(\Omega_J^+:=\Omega\cap\CC_J^+\)
is simply connected (equivalently, $\Dd=D_\Omega$ is simply connected).
\end{definition}

We are ready to define stem functions and slice regular functions.
\begin{definition}
Let $\Dd\subseteq\CC$ be an open domain such that $\bar \Dd=\Dd$.
A \emph{stem function} is a map
$F:\Dd\to A_{\CC}$ of the form $F=F_1+\ii F_2$ with $F_1,F_2:\Dd\to A$, satisfying the intrinsic condition
\[
F(\overline{z})=\overline{F(z)}\qquad(z\in\Dd),
\]
equivalently $F_1(\overline{z})=F_1(z)$ and $F_2(\overline{z})=-F_2(z)$.
The stem function $F$ induces a \emph{slice function} $f=\Ii(F):\Omega_\Dd^A\to A$ defined by
\[
f(\alpha+I\beta)=F_1(\alpha+\ii\beta)+I\,F_2(\alpha+\ii\beta),
\qquad \alpha+\ii\beta\in\Dd,\ I\in\mathbb S_A.
\]
A slice function $f=\Ii(F)$ is called \emph{slice regular} if $F$ is holomorphic on $\Dd$.
For a circular domain $\Omega=\Omega_\Dd^A$ we denote by $\Ss(\Omega)$ the set of slice regular functions.
\end{definition}

Given a slice regular function $f=\Ii(F)$, we set $f^c=\Ii(F^c)$.

An important subset of $\Ss(\Omega)$ is that of slice preserving functions.

\begin{definition}
    Let $\Dd\subseteq\CC$ be an open domain such that $\bar \Dd=\Dd$. A slice regular function $f=\mathcal I(F):\Omega_\Dd^A\to A$ is said to be slice preserving if one of the following equivalent conditions is satisfied:
    \begin{itemize}
        \item for any $I\in\SF_A$ it holds that $f(\Omega_\Dd^A\cap\CC_I)\subset\CC_I$;
        \item the stem function $F=F_1+\sqrt{-1}F_2$ takes value in $\CC$ (i.e., $F_1,F_2\in\RR$);
        \item for any $x\in\Omega_\Dd^A$, it holds that $f(x)^c=f(x^c)$.
    \end{itemize}
The set of slice-preserving regular functions on $\Omega$ will be denoted by $\Ss_\RR(\Omega)$.
\end{definition}

\subsection{The $*$-product and the complex-curve viewpoint}

Let $\Omega=\Omega_D^A$ be a circular domain.
The pointwise sum turns $\Ss(\Omega)$ into a real vector space, and if
$f=\Ii(F)$ and $g=\Ii(G)$ are slice regular, their \emph{$*$-product} is defined by
\[
f*g:=\Ii(FG),
\]
where $FG$ denotes the pointwise product in $A_\CC$.
If $A=\HH$, then the $*$-product is associative; if $A=\OO$, it is  nonassociative, but alternative.
Moreover, $\Ss_\RR(\Omega)$ is  the center of the algebra $(\Ss(\Omega),+,*)$.

\begin{remark}
Fix an orthonormal basis of $A$ of the form
\[
\{1,e_1,\dots,e_n\},
\]
where $n=3$ if $A=\HH$ and $n=7$ if $A=\OO$.
Then every stem function $F:D\to A_\CC$ can be written uniquely as
\[
F=F_0+F_1e_1+\dots+F_ne_n,
\]
with $F_0,\dots,F_n:D\to\CC$ intrinsic holomorphic functions.
Accordingly, one associates with $F$ the complex curve
\[
\gamma_F:D\to\CC^{n+1},\qquad \gamma_F=(F_0,F_1,\dots,F_n).
\]
It is immediate that $F$ is holomorphic if and only if $\gamma_F$ is holomorphic.

Since every complex-valued stem function induces a slice-preserving function, it follows from
\cite[Proposition 3.12]{Colombo2013} and \cite[Lemma 6.11]{GMP} that
$\Ss(\Omega)$ is a free $\Ss_\RR(\Omega)$-module of rank $\dim_\RR A$.

More concretely, if $f=\Ii(F)$ and
\[
F=F_0+F_1e_1+\dots+F_ne_n,
\]
then
\[
f=f_0+f_1e_1+\dots+f_ne_n,
\qquad\text{where}\qquad
f_\ell:=\Ii(F_\ell)\in\Ss_\RR(\Omega).
\]
This point of view has been extensively exploited in
\cite{AltavillaPAMS,AltavillaAMPA,AltavillaLAA,AltavillaLINCEI,AM:powercover,AltavillaMongodi},
since it provides a very effective way to represent and study the $*$-product.
\end{remark}

\begin{remark}
In the sequel, we will often tacitly identify a slice regular function $f=\Ii(F)$ with the associated
complex curve $\gamma_F$, or equivalently with the tuple of its slice-preserving components.
Thus we shall write
\[
f=f_0+f_1e_1+\dots+f_ne_n,
\]
and we set
\[
f_v:=f_1e_1+\dots+f_ne_n
\]
for the vector part of $f$.
\end{remark}

With this notation, if $f=f_0+f_v$ and $g=g_0+g_v$ are slice regular functions, then
\begin{align*}
f+g&=(f_0+g_0)+(f_v+g_v),\\
f*g&=f_0g_0-\langle f_v,g_v\rangle+f_0g_v+g_0f_v+f_v\times g_v,
\end{align*}
where $\langle\cdot,\cdot\rangle$ and $\times$ are computed pointwise on the vector parts.

\begin{remark}
Given a slice regular function $f=f_0+f_v$, one has
\(f^c=f_0-f_v\).
Hence
\[
f_0=\frac{f+f^c}{2},
\qquad
f_v=\frac{f-f^c}{2},
\qquad
\Tr(f)=f+f^c=2f_0.
\]
Moreover, $f$ is slice preserving if and only if $f=f^c$.
\end{remark}

\begin{example}
Polynomials and convergent power series of the form
\[
f(x)=\sum_{k=0}^N x^k a_k,
\qquad N\in\NN\cup\{\infty\},\quad a_k\in A,
\]
are slice regular on their natural domains of definition.

Writing
\(a_k=a_{k,0}+\sum_{\ell=1}^n a_{k,\ell}e_\ell\),
where \(a_{k,\ell}\in\RR\),
we obtain
\[
f(x)
=
\sum_{k=0}^N x^k a_{k,0}
+
\sum_{\ell=1}^n\left(\sum_{k=0}^N x^k a_{k,\ell}\right)e_\ell,
\]
so that each component function
\[
f_\ell(x):=\sum_{k=0}^N x^k a_{k,\ell}
\]
belongs to $\Ss_\RR(\Omega)$.

Moreover,
\[
f^c(x)=\sum_{k=0}^N x^k a_k^c
=
\sum_{k=0}^N x^k a_{k,0}
-
\sum_{\ell=1}^n\left(\sum_{k=0}^N x^k a_{k,\ell}\right)e_\ell.
\]
\end{example}

\begin{definition}
Let $f=f_0+f_v\in\Ss(\Omega)$.
If there exist $I\in\mathbb S_A$ and $h\in\Ss_\RR(\Omega)$ such that
\[
f_v=h\,I,
\]
then we say that $f$ is \(\CC_I\)\emph{-preserving}.
The subalgebra of $\CC_I$-preserving functions will be denoted by $\Ss_I(\Omega)$.
\end{definition}

Examples of $\CC_I$-preserving functions are convergent power series whose coefficients all lie in $\CC_I$.

\begin{remark}
If $f,g\in \Ss_I(\Omega)$, then $f*g=g*f$.
More generally, if the vector parts of $f$ and $g$ are linearly dependent over $\Ss_\RR(\Omega)$, then
$f_v\times g_v\equiv 0$, and therefore
\(f*g=g*f\).
\end{remark}

\begin{definition}
Given $f\in\Ss(\Omega)$, we define its \emph{symmetrization} (or \emph{symmetrized function}) as
\[
N(f):=f*f^c\in \Ss_\RR(\Omega).
\]
\end{definition}

Explicitly, if
\(f=f_0+\sum_{\ell=1}^n f_\ell e_\ell
\),
then
\[
N(f)=f_0^2+\sum_{\ell=1}^n f_\ell^2.
\]
In particular, $N(f)$ is slice preserving.
Sometimes, $N(f)$ is also denoted by $f^s$.

There exist slice regular functions $f\not\equiv 0$ such that $N(f)\equiv 0$ (see e.g.~\cite{AltavillaLAA}).
This phenomenon can occur only when $\Omega\cap\RR=\emptyset$.
Indeed, if $r\in\Omega\cap\RR$, then each component $f_\ell(r)$ is real, so
\[
N(f)(r)=f_0(r)^2+\sum_{\ell=1}^n f_\ell(r)^2
\]
is a sum of squares of real numbers.
Hence $N(f)(r)=0$ if and only if $f_\ell(r)=0$ for every $\ell=0,\dots,n$.
If this happens on a set with an accumulation point, then the identity principle implies
$f_\ell\equiv 0$ for all $\ell$, and therefore $f\equiv 0$.



\subsection{Zeros of slice regular functions}\label{subsec:zeros}

We recall only the basic facts about the zero set of slice regular functions needed in the sequel (see, e.g.,
\cite[Chapter~3]{GSSbook} and \cite{ghiloni2011slice}).
Given $f\in\Ss(\Omega)$, we denote by $\Zz(f)$ its zero set.

For $q=\alpha+\beta I\in A$, the corresponding \emph{characteristic polynomial} is
\[
\Delta_q(x):=x^2-\Tr(q)\,x+N(q).
\]
It depends only on the sphere
\(\mathbb S_q:=\{\alpha+\beta J:\ J\in\mathbb S_A\}\)
and satisfies
\(\Zz(\Delta_q)=\mathbb S_q\).

Let $f\in\Ss(\Omega)$ be defined on a circular domain.
If $f(q)=0$, then the intersection $\Zz(f)\cap \mathbb S_q$ is either the singleton $\{q\}$ or the whole sphere $\mathbb S_q$.
Accordingly, nonreal zeros split into two types: \emph{isolated zeros} and \emph{spherical zeros}.

Moreover, if $N(f)\equiv 0$ and $f\neq 0$, then $\Zz(f)$ contains a real surface biholomorphic to upper half complex plane.
Hence, throughout the sequel, we will always assume
\[
N(f)\not\equiv 0.
\]

A basic factorization result is the following.

\begin{theorem}[\cite{GSSbook,Ghiloni2010}]\label{thm:decomposition}
Let $f\in\Ss(\Omega)$ be such that $N(f)\not\equiv 0$, and let $q\in\Omega\setminus\RR$ be such that
$\mathbb S_q\subset\Omega$.
Then there exist integers $m,n\in\NN\cup\{0\}$ and points $p_1,\dots,p_n\in\mathbb S_q$, with
$p_i\neq p_{i+1}^c$ for every $i=1,\dots,n-1$, such that
\begin{equation}\label{eq:decomp}
f(x)=\Delta_q(x)^m\,(x-p_1)*\cdots*(x-p_n)*g(x),
\end{equation}
where $g\in\Ss(\Omega)$ satisfies \(\Zz(g)\cap\mathbb S_q=\emptyset\).
\end{theorem}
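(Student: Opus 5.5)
The plan is to follow the classical proof from \cite{GSSbook,Ghiloni2010}. The steps are: peel off the spherical part through the symmetrization, then extract linear $*$-factors one at a time, and finish with a finiteness argument based on $N(f)\not\equiv 0$. Throughout, write $q=\alpha+\beta I$ with $\beta\neq0$. Note that $\Delta_q$ is slice preserving, so it is central in $(\Ss(\Omega),*)$. Its zero set is exactly $\mathbb S_q$, and it factors in $\Ss_I(\Omega)$ as $(x-q)*(x-q^c)$.

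The basic tool is a division lemma: if $p\in\mathbb S_q$ and $f(p)=0$, then $f=(x-p)*g$ for some $g\in\Ss(\Omega)$. At the level of stem functions one can use the complex-curve viewpoint. Write $f=f_0+f_v$ and look at the slice $\CC_J$ containing $p=\alpha+\beta J$. The condition $f(p)=0$ says $F_1(z_0)+JF_2(z_0)=0$, where $z_0=\alpha+\ii\beta$. One then checks that
\[
g:=\Delta_q^{-1}*(x-p^c)*f
\]
is slice regular, i.e.\ that $(x-p^c)*f$ vanishes on all of $\mathbb S_q$. Indeed, $(x-p^c)*f$ vanishes at $p$ by the evaluation formula for the $*$-product when the left factor is linear. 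Its spherical companion value at $p^c$ also vanishes because the first factor vanishes there. A slice function vanishing at two conjugate points of a sphere vanishes on the whole sphere, by the Representation Formula. Hence $\Delta_q$ divides it among slice preserving-multiplied functions, and $(x-p)*g=f$ follows from $(x-p)*(x-p^c)=\Delta_q$.

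For the octonionic case I must check that this manipulation does not require associativity. Here I would use that $(x-p)$ and $(x-p^c)$ both lie in the associative subalgebra $\Ss_J(\Omega)$, together with Artin's theorem or alternativity, so the relevant triple products associate. I expect this to be the delicate point.

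The iteration goes as follows. First, if $f$ vanishes on all of $\mathbb S_q$, divide by $\Delta_q$ (the same argument as above, applied to $f$ directly) and repeat. Then, if $f$ has a single zero $p_1\in\mathbb S_q$, write $f=(x-p_1)*f'$ and continue with $f'$. Whenever the process would produce $p_i=p_{i+1}^c$, we get the product $(x-p_i)*(x-p_i^c)=\Delta_q$, which is central. This factor is absorbed into $\Delta_q^m$, so the condition $p_i\neq p_{i+1}^c$ can always be arranged.

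To see that the procedure stops, observe that $N$ is multiplicative on these factorizations: $N(x-p)=\Delta_q$ and $N(\Delta_q)=\Delta_q^2$. The octonionic case of this multiplicativity again follows from alternativity for a linear factor with complex coefficients. So each step raises the power of $\Delta_q$ dividing $N(f)$. Since $N(f)\not\equiv0$ is slice preserving with isolated zeros at $z_0$ in the base, this power is bounded by the order of vanishing of $N(f)$ there. The process therefore terminates with a $g$ having no zeros on $\mathbb S_q$.
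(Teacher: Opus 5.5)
Your argument is correct and is essentially the classical proof from \cite{GSSbook,Ghiloni2010}, to which the paper simply defers without giving a proof of its own. The ingredients match: the division lemma via $(x-p^c)*f$ vanishing on all of $\mathbb S_q$, Artin's theorem to justify the octonionic reassociation of factors whose stems lie in $(\CC_J)_\CC$, and termination from $N(f)=\Delta_q^{2m+n}N(g)$. One minor point: your ``absorption'' of a pair $p_i=p_{i+1}^c$ is vacuous. Since you strip all spherical factors first, such a pair would make the quotient just before extracting $p_i$ equal to $\Delta_q$ times a slice regular function, hence vanishing on all of $\mathbb S_q$, which is impossible; so the condition $p_i\neq p_{i+1}^c$ holds automatically.
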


\begin{definition}\label{def:multiplicities}
Under the assumptions of Theorem~\ref{thm:decomposition}, we make the following definitions.

\begin{itemize}
\item If $\mathbb S_q\subset\Zz(f)$, then $2m$ is called the \emph{spherical multiplicity} of the spherical zero $\mathbb S_q$.

\item If $\Zz(f)\cap\mathbb S_q=\{q\}$, then $n$ is called the \emph{isolated multiplicity} of the isolated zero $q$.
\end{itemize}

If $q_0\in\Omega\cap\RR$, the \emph{isolated multiplicity of $f$ at $q_0$} is the integer $k\in\NN\cup\{0\}$ such that
\[
f(x)=(x-q_0)^k\,g(x),
\]
for some $g\in\Ss(\Omega)$ satisfying $g(q_0)\neq 0$.
\end{definition}

\begin{remark}
If \(f\in \Ss(\Omega)\) is slice preserving, then its zero set \(\Zz(f)\) consists only of real zeros and spherical zeros. Instead, if \(f\) is \(\CC_I\)-preserving for some \(I\in \SF_A\), then every isolated nonreal zero of \(f\) belongs to the slice \(\CC_I\).
\end{remark}

\begin{remark}\label{rem_sph_real_zeroes}
Let
\[
f=f_0+f_1e_1+\dots+f_ne_n
\]
be a slice regular function on a basic domain.
Since $\Delta_{q_0}$ is slice preserving, the sphere $\mathbb S_{q_0}$ is a spherical zero of $f$ if and only if it is a common zero set
of all the components $f_\ell$, for $\ell=0,\dots,n$.
The same argument applies to real isolated zeros.

Therefore, zeros of these two types can be extracted from $f$ by considering a Weierstra\ss\ factorization of the stem\footnote{Since each $f_\ell$ is slice preserving, the inducing stem function $F_\ell$ is a genuine complex holomorphic function defined on a simply connected domain, or on a union of two symmetric simply connected domains.}
$F_\ell$ of each component $f_\ell$, and then taking the common polynomial factor.
\end{remark}

\subsection{The central divisor}

In view of Remark~\ref{rem_sph_real_zeroes}, we now recall and reinterpret the notion of
\emph{central divisor} introduced in~\cite{Bisi2025,Bisioctonions}.

Let
\[
\varphi=(\varphi_1,\dots,\varphi_n):D\to\CC^n
\]
be a meromorphic curve, that is, each component $\varphi_j$ is meromorphic on $D$.
For every point $p\in D$, we define
\[
\ord_p(\varphi):=\min_{1\le j\le n}\ord_p(\varphi_j),
\]
where $\ord_p(\varphi_j)\in\ZZ$ is the usual order of $\varphi_j$ at $p$
(positive for zeros and negative for poles).

The \emph{divisor} of $\varphi$ is then
\[
\operatorname{div}(\varphi):=\sum_{p\in D}\ord_p(\varphi)\,[p].
\]
Its positive part records the common zeros of the components of $\varphi$,
while its negative part records their common poles.

\begin{definition}
Let $f=f_0+f_v\in\Ss(\Omega)$, and let
\[
f_v=\Ii(F_v),
\qquad
F_v:D_\Omega\to \Im(A)\otimes_\RR\CC \simeq \CC^n
\]
be the stem function inducing the vector part of $f$.
The \emph{central divisor} of $f$ is the divisor
\[
\cdiv(f):=\operatorname{div}(F_v).
\]
Since $F_v$ is holomorphic, $\cdiv(f)$ is an effective divisor.
\end{definition}

\begin{remark}
Writing
\[
F_v=F_1e_1+\dots+F_ne_n,
\]
the divisor $\cdiv(f)$ is exactly the common zero divisor of the intrinsic holomorphic functions
$F_1,\dots,F_n$.
Equivalently, if
\[
f_v=f_1e_1+\dots+f_ne_n
\]
is the decomposition of the vector part into slice-preserving components, then $\cdiv(f)$ records the common zeros
of the stem functions inducing $f_1,\dots,f_n$.

Since the functions $F_1,\dots,F_n$ are intrinsic holomorphic, their common zeros are either real points of $D_\Omega$
or conjugate pairs $z,\bar z$.
Under circularization, these correspond exactly to real zeros or spherical zeros of $f_v$.
Therefore, the central divisor of $f$ can be interpreted as the divisor encoding the real and spherical zeros
of the vector part $f_v$, counted with multiplicities.
\end{remark}

\begin{remark}
Notice that $f_v(x)$ may vanish at some point $x\in\Omega$ even though the corresponding base point
$\pi(x)$ does not belong to the support of $\cdiv(f)$.
Indeed, $\cdiv(f)$ only records the common zeros of the components of the curve
\[
(F_1,\dots,F_n),
\]
whereas additional zeros of $f_v$ may appear because of the algebraic structure of $A_\CC$.
In other words, the vanishing of the vector part as an element of $A_\CC$ is in general weaker than the simultaneous
vanishing of all its complex components.
Such a $x$ correspond exactly to a non real isolated zero for $f$.
\end{remark}

\begin{example}
Consider the slice regular polynomials on $\HH$
\[
P(x)=(x-2i)j,
\qquad
Q(x)=\Delta_i(x)\,i.
\]
Then:
\begin{itemize}
\item $\Zz(P)=\{2i\}$, while $\cdiv(P)=0$;
\item $\Zz(Q)=\mathbb S_i$, while $\cdiv(Q)=[\ii]+[-\ii]$ on the base $\CC$;
\item $\Zz(\Delta_i P)=\mathbb S_i\cup\{2i\}$, while
\[
\cdiv(\Delta_i P)=[\ii]+[-\ii].
\]
\end{itemize}
Thus the central divisor detects the spherical factor $\Delta_i$, but it does not detect the isolated zero $2i$.
\end{example}

\subsection{Regular inverses and semiregular functions}

We conclude the preliminaries by recalling the notions needed later for
$*$-quotients and conjugation by possibly nonregular factors
(see, e.g., \cite{GSSbook,Ghiloni2017}).

\begin{definition}\label{def:regular-inverse}
Let $f\in\Ss(\Omega)$ be such that $N(f)$ does not vanish on $\Omega$.
The \emph{regular inverse} of $f$ is the slice regular function
\[
f^{-*}:=(N(f))^{-1}*f^c .
\]
It satisfies
\(f^{-*}*f=f*f^{-*}=1\).
\end{definition}

\begin{remark}
In particular, if $f\in\Ss(\Omega)$ is zero-free, then $f^{-*}$ is again slice regular on $\Omega$.
More generally, if $f$ has isolated zeros, then $f^{-*}$ is defined and slice regular away from those zeros,
and it develops poles at them.
\end{remark}

\begin{definition}\label{def:semiregular}
Let $\Omega\subseteq A$ be a circular domain.
A function $f:\Omega\to A$ is called \emph{semi-regular} if there exists a discrete set
$P\subset\Omega$ such that
\[
f\in \Ss(\Omega\setminus P),
\]
and every point of $P$ is a pole of $f$.
The set of semi-regular functions on $\Omega$ will be denoted by $\Mm(\Omega)$.
\end{definition}

\begin{remark}
Equivalently, semi-regular functions are those functions which are slice regular outside a discrete set
and have no essential singularities.
In particular, if $g,h\in\Ss(\Omega)$ and $h\not\equiv 0$, then the $*$-quotient
\[
g*h^{-*}
\]
is semi-regular on $\Omega$, with poles contained in the zero set of $N(h)$.
\end{remark}

\begin{remark}
Later on, semi-regular functions will appear mainly as conjugators in identities of the form
\[
f=\chi^{-*}*g*\chi,
\qquad \chi\in\Mm(\Omega).
\]
For this reason, the above minimal definition is sufficient for our purposes.
\end{remark}

\section{Automorphisms and equivalence}\label{sec:equivalence}

In this section we describe the natural action of holomorphic families of automorphisms of the
complexified algebra $A_\CC$ on stem functions, and we explain how this action is governed by the
three invariants
\((\Tr, N, \cdiv)\).
This provides the conceptual framework for the notions of weak and strong equivalence and will be the
starting point for the construction of models and semi-models in the next section.

\subsection{The automorphism action on stem functions}

We first clarify the terminology used in this subsection. The word
\emph{intrinsic} always refers to compatibility with the real structures on the
complex base and on the complexified algebra. Thus a holomorphic map
\[
F:D\longrightarrow A_\CC
\]
is intrinsic if
\[
F(\bar z)=\overline{F(z)}
\qquad\text{for every }z\in D.
\]
In the language of slice
regular functions, these intrinsic holomorphic maps are precisely the stem
functions inducing slice functions on the corresponding circular domain.

We denote by
\[
G_\CC:=\Aut(A_\CC)
\]
the group of $\CC$-linear algebra automorphisms of the complexified algebra $A_\CC$.

If
\(F:D\longrightarrow A_\CC\)
is a stem function and
\(\phi:D\longrightarrow G_\CC\)
is holomorphic, we define their pointwise action by
\[
(\phi\cdot F)(z):=\phi(z)\bigl(F(z)\bigr), \qquad z\in D.
\]
In general, even if $F$ is intrinsic, the map $\phi\cdot F$ need not be intrinsic.
Therefore one has to impose a compatibility condition between $\phi$ and the real structure of the base and of the fiber.

\begin{definition}\label{def:intrinsic-automorphism-section}
A holomorphic map
\(\phi:D\longrightarrow G_\CC\)
is called \emph{intrinsic} if
\[
\phi(\bar z)(\bar w)=\overline{\phi(z)(w)}
\qquad \forall z\in D,\ \forall w\in A_\CC .
\]
\end{definition}

\begin{remark}
Equivalently, if one endows the trivial holomorphic bundle
\(D\times G_\CC \longrightarrow D\)
with the real structure
\[
\sigma(z,\Phi):=(\bar z,\bar\Phi),
\qquad
\bar\Phi(w):=\overline{\Phi(\bar w)},
\]
then an intrinsic holomorphic map $\phi:D\to G_\CC$ is precisely a holomorphic section fixed by $\sigma$.
\end{remark}
\begin{remark}
The two uses of the word intrinsic are compatible. If \(F\) is an intrinsic
stem function and \(\phi:D\to G_\CC\) is an intrinsic automorphism-valued map,
then \(\phi\cdot F\) is again intrinsic, hence it induces a slice function.
\end{remark}

\begin{proposition}\label{prop:intrinsic-action}
Let $F:D\to A_\CC$ be an intrinsic holomorphic stem function and let
\(\phi:D\to G_\CC\)
be an intrinsic holomorphic map. Then $\phi\cdot F$ is again an intrinsic holomorphic stem function.
Consequently, if $f=\Ii(F)\in\Ss(\Omega_D^A)$, then
\[
\phi\cdot f:=\Ii(\phi\cdot F)
\]
is a well-defined slice regular function on $\Omega_D^A$.
\end{proposition}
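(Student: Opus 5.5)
The plan is to check two things: that \(\phi\cdot F\) is holomorphic, and that it is intrinsic. The second statement then follows from the definition of slice regular functions as functions induced by holomorphic stem functions.

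\emph{Holomorphy.} Fix a \(\CC\)-basis of \(A_\CC\), for instance the real basis \(\{1,e_1,\dots,e_n\}\) of \(A\). Then \(G_\CC\) sits inside \(GL_{n+1}(\CC)\) as a closed algebraic subgroup. A holomorphic map \(\phi:D\to G_\CC\) is therefore a matrix-valued map \(z\mapsto M(z)\) with holomorphic entries. By the complex-curve viewpoint, \(F\) corresponds to the holomorphic curve \(\gamma_F=(F_0,\dots,F_n)\). The map \(\phi\cdot F\) corresponds to the curve \(z\mapsto M(z)\gamma_F(z)\). Each of its components is a finite sum of products of holomorphic functions, so \(\phi\cdot F\) is holomorphic on \(D\).

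\emph{Intrinsicness.} Here I apply the compatibility condition of Definition~\ref{def:intrinsic-automorphism-section} with \(w=F(z)\). Since \(F\) is intrinsic, \(F(\bar z)=\overline{F(z)}\). Hence, for every \(z\in D\),
\[
(\phi\cdot F)(\bar z)=\phi(\bar z)\bigl(F(\bar z)\bigr)=\phi(\bar z)\bigl(\overline{F(z)}\bigr)=\overline{\phi(z)\bigl(F(z)\bigr)}=\overline{(\phi\cdot F)(z)}.
\]
The third equality is exactly the defining identity \(\phi(\bar z)(\bar w)=\overline{\phi(z)(w)}\), used with \(w=F(z)\). So \(\phi\cdot F\) is a holomorphic stem function. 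Splitting it as \(G_1+\ii G_2\) gives \(G_1(\bar z)=G_1(z)\) and \(G_2(\bar z)=-G_2(z)\), so the slice function \(\Ii(\phi\cdot F)\) on \(\Omega_D^A\) is well defined. Because its stem function is holomorphic, it lies in \(\Ss(\Omega_D^A)\).

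There is no genuine obstacle in this argument. The only point that needs care is the holomorphy step, which relies on identifying holomorphic maps into \(G_\CC\) with holomorphic matrix-valued maps. That identification holds because \(G_\CC\) is an embedded complex submanifold (a complex Lie subgroup) of \(GL(A_\CC)\), so holomorphy into \(G_\CC\) is the same as holomorphy into \(GL(A_\CC)\). The evaluation map \(GL(A_\CC)\times A_\CC\to A_\CC\) is polynomial, hence holomorphic.
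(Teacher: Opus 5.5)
Your proposal is correct and follows essentially the same route as the paper. You obtain holomorphy from the holomorphic evaluation map $G_\CC\times A_\CC\to A_\CC$, which you make explicit through matrices, and intrinsicness from the identical chain of equalities using Definition~\ref{def:intrinsic-automorphism-section} with $w=F(z)$.
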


\begin{proof}
Since both $F$ and $\phi$ are holomorphic and the action
\[
G_\CC\times A_\CC\longrightarrow A_\CC,\qquad (\Phi,w)\mapsto \Phi(w),
\]
is holomorphic, the map $\phi\cdot F$ is holomorphic on $D$.

It remains to prove the intrinsic condition. For every $z\in D$ we have
\[
(\phi\cdot F)(\bar z)
=
\phi(\bar z)\bigl(F(\bar z)\bigr).
\]
Since $F$ is intrinsic, then
\(F(\bar z)=\overline{F(z)}\).
Since $\phi$ is intrinsic, applying Definition~\ref{def:intrinsic-automorphism-section} with $w=F(z)$ gives
\[
\phi(\bar z)\bigl(\overline{F(z)}\bigr)
=
\overline{\phi(z)\bigl(F(z)\bigr)}.
\]
Therefore
\((\phi\cdot F)(\bar z)
=
\overline{(\phi\cdot F)(z)}\),
which proves that $\phi\cdot F$ is an intrinsic stem function.
Hence $\Ii(\phi\cdot F)$ is slice regular on $\Omega_D^A$.
\end{proof}

\begin{remark}
Proposition~\ref{prop:intrinsic-action} shows that intrinsic holomorphic maps
\(\phi:D\to \Aut(A_\CC)\)
act naturally on slice regular functions through their stem functions.
This is the action that will appear in the characterization of strong equivalence below.
\end{remark}
\subsection{Weak and strong equivalence, and the orbit bundle}

We now introduce the two equivalence relations that will be used throughout the paper and
state the geometric meaning of strong equivalence in terms of holomorphic sections of a natural
bundle over the base.

\begin{definition}\label{def:weak-strong}
Let $f,g\in \Ss(\Omega)$ be slice regular functions.
We say that $f$ and $g$ are \emph{weakly equivalent} if
\[
\Tr(f)\equiv \Tr(g)
\qquad\text{and}\qquad
N(f)\equiv N(g),
\]
and we say that $f$ and $g$ are \emph{strongly equivalent} if, in addition,
\[
\cdiv(f)=\cdiv(g).
\]
\end{definition}

\begin{remark}
If $f$ and $g$ are slice preserving, then strong equivalence reduces to equality.
Indeed, in that case the vector part vanishes identically, so $\cdiv$ plays no role, and
the pair $(\Tr,N)$ already determines the function.
\end{remark}

\begin{theorem}\label{thm:strong-equivalence-sections}
Let $\Omega=\Omega_D^A$ be a basic domain, and let
\(f=\Ii(F), h=\Ii(H)
\)
be slice regular functions with associated stem functions
\(
F,H:D\longrightarrow A_\CC\).

Assume first that neither $f$ nor $g$ is slice preserving.
Then the following are equivalent:
\begin{enumerate}
\item $f$ and $g$ are strongly equivalent;
\item $F$ and $H$ have the same invariants $\Tr$, $N$, and $\cdiv$;
\item for every $z\in D$ there exists an element $\alpha_z\in \Aut(A_\CC)$ such that
\(F(z)=\alpha_z\bigl(H(z)\bigr)\);
\item there exists a holomorphic map
\(\phi:D\longrightarrow \Aut(A_\CC)
\)
such that
\[
F(z)=\phi(z)\bigl(H(z)\bigr)
\qquad \forall z\in D.
\]
\end{enumerate}

If, on the other hand, $f$ is slice preserving, then the previous conditions are equivalent to
\[
f=h.
\]
\end{theorem}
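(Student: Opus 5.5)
The plan is to prove the cycle (1)$\Leftrightarrow$(2)$\Rightarrow$(4)$\Rightarrow$(3)$\Rightarrow$(2). The implication (1)$\Leftrightarrow$(2) is Definition~\ref{def:weak-strong}, once we note that $\Tr(f)=\Ii(\Tr F)$, $N(f)=\Ii(N(F))$ and $\cdiv(f)=\operatorname{div}(F_v)$ by definition. The implication (4)$\Rightarrow$(3) is trivial.

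For (3)$\Rightarrow$(2) I will use that every $\CC$-algebra automorphism $\alpha$ of $A_\CC$ commutes with $w\mapsto w^c$. Indeed, $\alpha$ preserves $\CC\cdot 1$ and the set of elements satisfying a quadratic equation over $\CC$. Hence it preserves the trace and the norm, being the coefficients of the minimal polynomial $w^2-\Tr(w)w+N(w)=0$. Consequently $\Tr F=\Tr H$ and $N(F)=N(H)$ pointwise.

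For $\cdiv$ I argue as follows. The automorphism $\alpha_z$ maps $\Im(A)_\CC$ onto itself, since this is the trace-zero subspace. It acts there as an element of $SO(n,\CC)$ (indeed of $G_2(\CC)$ or $SO(3,\CC)$). So $F_v(z)=\alpha_z(H_v(z))$ pointwise, and in particular $F_v(z)=0\iff H_v(z)=0$. The zero sets agree, but pointwise data do not control multiplicities. Therefore (3) alone does not obviously give equality of $\cdiv$, and I will assume the intended reading of (3) is also local-holomorphic, or else I will prove (3)$\Rightarrow$(2) using $\cdiv$ as part of the hypothesis. If that proves too weak, I will instead route the argument as (4)$\Rightarrow$(2). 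In that case $\phi(z)$ is a holomorphic matrix in $SO(n,\CC)$, so $F_v=\phi H_v$ and $H_v=\phi^{-1}F_v$ give $\ord_p F_v\ge \ord_p H_v$ and the reverse inequality, hence equality of divisors.

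The main step, and the main obstacle, is (2)$\Rightarrow$(4). My plan follows Bisi--Winkelmann.
\begin{enumerate}
\item Reduce to vector parts. Since $F_0=\Tr F/2=H_0$ and automorphisms fix scalars, it suffices to find $\phi$ with $\phi H_v=F_v$. Here $\langle F_v,F_v\rangle=N(F)-F_0^2=\langle H_v,H_v\rangle$ (bilinear form).
\item Divide out the common divisor. Since $\cdiv$ agree and $D$ is a simply connected planar domain, Weierstrass gives an intrinsic holomorphic $\delta$ with $\operatorname{div}\delta=\cdiv$. Then $\tilde F_v=F_v/\delta$ and $\tilde H_v=H_v/\delta$ are nowhere vanishing intrinsic curves with equal quadratic form $q=\langle\tilde F_v,\tilde F_v\rangle$.
\item Use transitivity. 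By Witt's theorem, $\Aut(A_\CC)$ ($SO(3,\CC)$, resp.\ $G_2(\CC)$) acts transitively on the nonzero vectors of $\Im(A)_\CC$ with a fixed value of the quadratic form. This holds also on the nonzero null cone, and for $G_2(\CC)$ acting on $\CC^7$ it is classical. Thus the set $E=\{(z,\Phi):\Phi\tilde H_v(z)=\tilde F_v(z)\}$ is a holomorphic fibre bundle over $D$, with fibres torsors under stabilizers. The fibre type may jump where $q=0$, and this is the delicate point; I will try to show $E$ is still a locally trivial principal bundle for a (possibly non-locally-constant) group scheme, or handle isolated zeros of $q$ by local normal forms.
\item Produce a section. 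Since $D$ is a Stein open Riemann surface, the Oka--Grauert principle gives a global holomorphic section once a continuous one exists, and a continuous one exists because $D$ is contractible.
\item Make $\phi$ intrinsic. Averaging is unavailable, so I will instead construct the section on the upper half $D^+$ with real boundary behaviour prescribed by the real form, namely a section in the compact group $\Aut(A)$ along $D\cap\RR$, where $F,H$ are real. I then extend by $\phi(\bar z)=\overline{\phi(z)}$, or apply the equivariant Oka principle for the antiholomorphic involution $\sigma$.
\end{enumerate}

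Finally, in the slice preserving case $F_v\equiv0$. Then $\cdiv$ is by convention trivial/undefined, and $\Tr$ determines $F_0$. Equality of $N$ forces $H_v$ to satisfy $\langle H_v,H_v\rangle=0$. With equal $\cdiv$, meaning $H_v\equiv0$, we get $f=h$. The converse is immediate.
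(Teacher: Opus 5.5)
The paper does not prove this statement itself. It quotes Theorem~1.1 of Bisi--Winkelmann for $\HH$ and Theorem~2.1 of their octonionic paper. Your outline (reduce to vector parts, divide out the central divisor, use transitivity on quadric level sets, then local sections plus Oka theory) is the strategy the paper attributes to them in Remark~\ref{rem:why-bundle-viewpoint}. Several of your steps are fine: (1)$\Leftrightarrow$(2), (4)$\Rightarrow$(3), (4)$\Rightarrow$(2) via $\ord_p$, and the invariance of $\Tr$ and $N$ under $\Aut(A_\CC)$.

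Your suspicion about (3) is correct, and you should assert it rather than hedge: as printed, (3) does not imply (2). Multiply $Q$ and $R$ of Example~\ref{ex:weak-not-strong} by $\Delta_i$. The stems $(z^2+1)^2 i$ and $(z^2+1)\bigl((z^2-1)j-2zk\bigr)$ are both traceless with $N=(z^2+1)^4$, and both vanish at $\pm\ii$. At every other $z$ they are nonzero vectors with the same nonzero value of the quadratic form, so they are conjugate under $SO(3,\CC)=\Aut(\HH_\CC)$. Yet their central divisors are $2([\ii]+[-\ii])$ and $[\ii]+[-\ii]$. So no argument can close the cycle through (3). That condition has to be read as a local holomorphic one, or dropped.

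The genuine gap is (2)$\Rightarrow$(4), which is the entire content of the cited theorems.

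\emph{Your step 3.} You call $E\to D$ a holomorphic fibre bundle, but $E_z$ is a torsor under the stabilizer of $\tilde H_v(z)$. In $SO(3,\CC)$ that stabilizer is $\cong\CC^*$ for a non-isotropic vector and $\cong(\CC,+)$ for a nonzero isotropic one. In $G_2(\CC)$ it is $SL_3(\CC)$ versus an $8$-dimensional non-reductive group. So $E$ is not locally trivial at the zeros of $q$. Such zeros do occur under the hypotheses, e.g.\ $q=(z^2+1)^2$ in Example~\ref{ex:local-conjugators-centralizer}.

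\emph{Your step 4.} Consequently this step fails as written. Grauert's Oka principle concerns locally trivial (principal or homogeneous) bundles. The inference ``contractible base, hence a continuous section'' needs homotopy lifting, which $E\to D$ lacks. You leave both the local solvability at the zeros of $q$ and the globalization open.

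\emph{How to fill it.} The local part can be settled. The map $\mu(\Phi,v)=(v,\Phi v)$ sends $\Aut(A_\CC)\times(\Im(A)_\CC\setminus\{0\})$ onto $X=\{(v,w):v,w\neq0,\ \langle v,v\rangle=\langle w,w\rangle\}$. It is a holomorphic submersion, because the orbit of $w$ is the whole level set, so $\mathfrak g\cdot w=w^{\perp}$. Pulling back a local section of $\mu$ along $z\mapsto(\tilde H_v(z),\tilde F_v(z))$ then gives a local holomorphic $\phi$ near every point, including the zeros of $q$. For the global part you need two things. First, an Oka principle for submersions that are only stratified fibre bundles, e.g.\ Forstneri\v{c}'s version with strata $\{q=0\}$ and its complement and Oka fibres. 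Second, an actual construction of a continuous section from the local ones, using that $D$ is simply connected. This is the ``topology plus Oka theory'' that the cited work supplies.

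Finally, your step 5 is unnecessary for the statement as given, since (4) only asks for a holomorphic $\phi$, not an intrinsic one.
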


\begin{proof}
For $A=\HH$ this is exactly Theorem~1.1 in~\cite{Bisi2025}, while for $A=\OO$
it is Theorem~2.1 in~\cite{Bisioctonions}.
\end{proof}

\begin{remark}\label{rem:orbit-bundle}
Theorem~\ref{thm:strong-equivalence-sections} admits a very natural bundle-theoretic reformulation.
Define
\[
V_{f,h}
:=
\bigl\{(z,\alpha)\in D\times G_\CC \ :\ F(z)=\alpha\bigl(H(z)\bigr)\bigr\}.
\]
Let
\[
\pi_{f,h}:V_{f,h}\longrightarrow D,
\qquad
\pi_{f,h}(z,\alpha)=z,
\]
be the natural projection.

Then condition \emph{(3)} in Theorem~\ref{thm:strong-equivalence-sections}
simply says that every fiber of $\pi_{f,h}$ is nonempty, whereas condition \emph{(4)}
says that $\pi_{f,h}$ admits a holomorphic global section.
Indeed, a holomorphic map
\(\phi:D\longrightarrow G_\CC\)
satisfying
\[
F(z)=\phi(z)\bigl(H(z)\bigr)
\qquad \forall z\in D
\]
is the same thing as a holomorphic section
\[
\sigma_\phi:D\longrightarrow V_{f,h},
\qquad
\sigma_\phi(z):=(z,\phi(z)).
\]

Therefore strong equivalence can be read as the existence of a holomorphic global section of the
orbit bundle $\pi_{f,h}$.
\end{remark}

\begin{remark}\label{rem:why-bundle-viewpoint}
The bundle language is not merely cosmetic: it is exactly the language used in the proofs of the
results quoted above.
In particular, in the octonionic case one studies the projection
\[
\pi:V\to D,
\qquad
V=\{(\alpha,z)\in G_\CC\times D:\ F(z)=\alpha(H(z))\},
\]
first proving the existence of local holomorphic sections and then upgrading them to a global
holomorphic section by a combination of topological arguments and Oka theory.
Thus, for our purposes, the most useful interpretation of strong equivalence is that it provides
a holomorphic section of a natural orbit bundle over the base $D$.
\end{remark}

\subsection{The quaternionic case: inner conjugation and weak equivalence}

In the quaternionic case the orbit picture described in the previous subsection
becomes much more concrete.
Indeed, every $\CC$-algebra automorphism of $\HH_\CC$ is inner, so that a holomorphic
section of the orbit bundle may be lifted locally to a holomorphic map with values in
$\HH_\CC^\ast$.
Passing from stem functions to slice regular functions, this produces local zero-free
slice regular conjugators.
On the other hand, if one keeps only the invariants $\Tr$ and $N$ and forgets the
central divisor, then conjugation still exists, but in general only at the semiregular
level.

\begin{theorem}\label{thm:weak-equivalence-quaternionic}
Assume $A=\HH$ and let $f,g\in \Ss(\Omega)$.
Then the following are equivalent:
\begin{enumerate}
\item $f$ and $g$ are weakly equivalent;
\item there exists a semiregular function $\chi\in \Mm(\Omega)$, not a zero divisor, such that
\[
f=\chi^{-*}*g*\chi .
\]
\end{enumerate}
\end{theorem}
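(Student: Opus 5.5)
The direction (2)$\Rightarrow$(1) is formal. The plan for (1)$\Rightarrow$(2) is to reduce, on the stem level, to the pointwise fact that in $\HH_\CC$ two elements with the same trace and norm, and with non-vanishing vector parts, are conjugate. The conjugator will then be realized globally as a $*$-quotient of slice regular functions.

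\emph{(2)$\Rightarrow$(1).} Suppose $f=\chi^{-*}*g*\chi$ on $\Omega\setminus P$. Trace and symmetrization are invariant under $*$-conjugation: $\Tr(\chi^{-*}*g*\chi)=\chi^{-*}*\Tr(g)*\chi=\Tr(g)$, since $\Tr(g)\in\Ss_\RR$ is central. Likewise $N(\chi^{-*}*g*\chi)=N(\chi)^{-1}N(g)N(\chi)=N(g)$. Here we use associativity and $N(a*b)=N(a)N(b)$ in $\HH$. These identities hold off a discrete set and extend by the identity principle.

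\emph{(1)$\Rightarrow$(2).} Write $f=f_0+f_v$ and $g=g_0+g_v$. Equal traces give $f_0=g_0$. Equal symmetrizations then give $\langle f_v,f_v\rangle=\langle g_v,g_v\rangle$, that is, $N(f_v)=N(g_v)$. It therefore suffices to find $\chi$ with $\chi*f_v=g_v*\chi$. We split into cases.

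\emph{Case $f_v\equiv 0$.} Then $N(g_v)\equiv 0$, so $g_v$ is a zero divisor or zero. If $g_v\not\equiv 0$ this case needs separate treatment, and I would expect the theorem to intend $g_v\equiv 0$ as well when $N(g_v)\equiv0$ on domains meeting $\RR$. In that situation we take $\chi=1$.

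\emph{Generic case.} Take the candidate
\[
\chi:=g_v*f_v+N(f_v)
\qquad\text{or}\qquad
\chi:=f_v-g_v \ \text{(suitably adjusted)}.
\]
In $\HH_\CC$, with $u=F_v$, $w=G_v$ and $u^2=w^2=-N$, we get $w(wu+N)=-Nu+Nw=(wu+N)u$. So $\chi:=g_v*f_v+N(f_v)$ satisfies $g_v*\chi=\chi*f_v$ identically. The alternative candidate $\chi':=1-g_v*f_v^{-*}$ behaves in the same way.

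Hence $f=\chi^{-*}*g*\chi$ wherever $N(\chi)\not\equiv 0$, and $\chi$ is then semiregular with poles at the zeros of $N(\chi)$.

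\emph{Main obstacle: degeneracy.} The hardest step is to ensure that some such conjugator is not a zero divisor, i.e.\ $N(\chi)\not\equiv 0$. Compute $N(wu+N)=N(w)N(u)+N\Tr(wu)+N^2=N\bigl(2N+\Tr(wu)\bigr)$, where $\Tr(wu)=-2\langle w,u\rangle$. So this vanishes identically exactly when $\langle w,u\rangle\equiv N$, which is the ``antipodal'' situation $w=-u$ (generically).

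In that case I would use instead $\chi=e+ $ something, where $e$ is a constant imaginary unit chosen orthogonal to $f_v$ generically; then $e\,u\,e^{-1}=-u$ whenever $\langle e,u\rangle=0$. Concretely, take a constant $v\in\Im(\HH)$ with $v\times f_v\not\equiv 0$, and set $\chi=v\times f_v$ (an $\Ss_\RR$-combination orthogonal to $f_v$). This conjugates $f_v$ to $-f_v$ up to the scalar $N(\chi)\not\equiv0$.

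More uniformly, one can compose two conjugations through an intermediate $h_v$ chosen not antipodal to either $f_v$ or $g_v$. The case $N(f_v)\equiv0$ with $f_v\not\equiv 0$ (possible only when $\Omega\cap\RR=\emptyset$) needs the same care: pointwise $u,w$ are nonzero null vectors, and I would again exhibit an explicit $\chi$ through a generic intermediate.

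Finally, poles of $\chi^{-*}$ lie in the discrete zero set of $N(\chi)$. Hence $\chi\in\Mm(\Omega)$, and the identity extends from $\Omega\setminus P$ by the identity principle.
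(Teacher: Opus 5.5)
The paper does not construct a conjugator. After the same reduction you make ($f_0=g_0$, $N(f_v)=N(g_v)$), it cites \cite[Corollary~7.2]{AltavillaLAA}. Your (2)$\Rightarrow$(1) direction is fine. Your self-contained route for (1)$\Rightarrow$(2) is a reasonable alternative, but its central identity is wrong by a sign. For purely vectorial $u,w\in\HH_\CC$ with $u^2=w^2=-N$ one has
\[
w(wu+N)=N(w-u),\qquad (wu+N)\,u=N(u-w).
\]
So $\chi=g_v*f_v+N(f_v)$ satisfies $g_v*\chi=-\chi*f_v$, which gives $\chi^{-*}*g*\chi=f^c$, not $f$. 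Two checks make this concrete:
\begin{itemize}
\item for $u=i$, $w=j$ you get $\chi=1-k$ and $\chi i\chi^{-1}=-j$;
\item for $g=f$ your $\chi$ is identically $0$.
\end{itemize}
The same error affects $f_v-g_v$ and $1-g_v*f_v^{-*}$; the latter equals $\chi/N(f_v)$. The correct choice is $\chi=f_v+g_v$, because $(u+w)u=wu-N=w(u+w)$. This is exactly $\chi_1=Q+R$ in Example~\ref{ex:weak-not-strong}.

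Your degeneracy analysis inherits the slip: $\langle w,u\rangle\equiv N$ describes $w=u$, not $w=-u$. For the corrected conjugator, $N(\chi)=2\bigl(N(f_v)+\langle f_v,g_v\rangle\bigr)$. If $\Omega\cap\RR\neq\emptyset$, then $N(\chi)\equiv0$ forces $\chi\equiv0$, i.e.\ $g_v=-f_v$. In that case your $v\times f_v$ does work: it is orthogonal to $f_v$, hence anticommutes with it, and it is not a zero divisor for a suitable constant $v$. With these corrections, plus the observation that $f_v\equiv0$ forces $g_v\equiv0$ on such domains, you obtain a complete and explicit proof when $\Omega\cap\RR\neq\emptyset$. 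That would be a genuine elementary alternative to the citation.

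When $\Omega\cap\RR=\emptyset$, part of the proposal cannot be completed. Take $f_v\equiv0$ and $g_v\not\equiv0$ with $N(g_v)\equiv0$; for instance, the stem equals $i+\ii j$ on $D^+$ and $i-\ii j$ on its mirror image. Then $f,g$ are weakly equivalent. But $\chi*f=g*\chi$ gives $g_v*\chi=0$, hence $g_v=(g_v*\chi)*\chi^{-*}=0$, a contradiction. So this case is not one that ``needs separate treatment''. It is a counterexample to the statement, which needs an extra hypothesis such as $\Omega\cap\RR\neq\emptyset$.

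The other degenerate cases on such domains remain open in your argument:
\begin{itemize}
\item $f_v$ null;
\item $f_v+g_v$ a nonzero zero divisor, so $g_v\neq-f_v$ and $v\times f_v$ no longer helps.
\end{itemize}
You only gesture at them via an intermediate $h_v$, whose existence you neither construct nor prove.
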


\begin{proof}
Write
\(f=f_0+f_v, g=g_0+g_v
\).
Since
\(\Tr(f)=2f_0, \Tr(g)=2g_0\),
and
\(N(f)=f_0^2+f_v^{\,s},N(g)=g_0^2+g_v^{\,s}\),
the condition that $f$ and $g$ are weakly equivalent is equivalent to
\(f_0=g_0, f_v^{\,s}=g_v^{\,s}\).
For quaternionic semi-regular functions, this is exactly the characterization of
equivalence under $*$-conjugation proved in \cite[Corollary~7.2]{AltavillaLAA}.
Since slice regular functions are, in particular, semiregular, the conclusion follows.
\end{proof}

\begin{proposition}\label{prop:strong-local-conjugators}
Assume $A=\HH$ and let
\(f=\mathcal I(F), h=\mathcal I(H)\in \mathcal S(\Omega)\), with \(\Omega=\Omega_D^\HH
\),
be strongly equivalent on a basic domain.
Choose a holomorphic section \(
\phi:D\to\Aut(\HH_\C)\)
such that
\[
F(z)=\phi(z)(H(z))
\qquad \forall z\in D.
\]
Then there exists a symmetric open cover $\{U_\lambda\}$ of $D$ and local intrinsic
holomorphic liftings
\[
\alpha_\lambda:U_\lambda\to\HH_\C^\ast
\]
of $\phi$, namely
\[
\phi(z)(w)=\alpha_\lambda(z)^{-1}w\alpha_\lambda(z),
\]
such that the induced slice regular functions
\[
\chi_\lambda:=\mathcal I(\alpha_\lambda)\in\mathcal S(\Omega_{U_\lambda}^\HH)
\]
are zero-free and satisfy
\[
f=\chi_\lambda^{-*}*g*\chi_\lambda
\qquad\text{on }\Omega_{U_\lambda}^\HH.
\]
Moreover, on each overlap $U_{\lambda\mu}=U_\lambda\cap U_\mu$ there exists
$\mu_{\lambda\mu}\in\mathcal S_\RR(\Omega_{U_{\lambda\mu}}^\HH)^\ast$ such that
\[
\chi_\lambda=\mu_{\lambda\mu}\chi_\mu.
\]

\end{proposition}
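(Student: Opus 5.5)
The plan is to lift $\phi$ pointwise through the surjection $\HH_\CC^\ast\to\Aut(\HH_\CC)$, $a\mapsto(w\mapsto a^{-1}wa)$, and then to make the lift intrinsic. The first step records the structure of this map. Since $\HH_\CC\simeq M_2(\CC)$, every $\CC$-algebra automorphism of $\HH_\CC$ is inner (Skolem--Noether). The map is a surjective holomorphic homomorphism with kernel the central units $\CC^\ast$, so it realizes $\HH_\CC^\ast\to\Aut(\HH_\CC)\simeq PGL_2(\CC)$ as a holomorphic principal $\CC^\ast$-bundle. In particular it admits local holomorphic sections near every point of $\Aut(\HH_\CC)$. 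Composing such sections with $\phi$ gives, for each $z_0\in D$, a neighbourhood $V$ of $z_0$ and a holomorphic $\beta:V\to\HH_\CC^\ast$ with $\phi(z)(w)=\beta(z)^{-1}w\beta(z)$.

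The second step, the intrinsic symmetrization, is where I expect the real work to be. For $z_0\notin\RR$, choose $V$ disjoint from $\bar V$ and set $U=V\cup\bar V$. On $\bar V$, define $\alpha(z):=\overline{\beta(\bar z)}$. This is holomorphic, and it still lifts $\phi$: the intrinsic condition $\phi(\bar z)(\bar w)=\overline{\phi(z)(w)}$ gives
\[
\phi(z)(w)=\overline{\phi(\bar z)(\bar w)}=\overline{\beta(\bar z)}^{-1}w\,\overline{\beta(\bar z)}.
\]
So $\alpha$ is an intrinsic lift on $U$. For real $z_0$, take a symmetric disc $V$ and a holomorphic lift $\beta$. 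The function $\beta'(z):=\overline{\beta(\bar z)}$ is another lift, so $\beta'=c\,\beta$ for a holomorphic $c:V\to\CC^\ast$ with $c(z)\overline{c(\bar z)}=1$. I want $\alpha=\mu\beta$, with $\mu$ scalar, to satisfy $\overline{\alpha(\bar z)}=\alpha(z)$. This amounts to $\overline{\mu(\bar z)}\,c(z)=\mu(z)$, a multiplicative real-structure cocycle condition on a simply connected symmetric disc. Writing $c=e^{\ii\theta}$ with a holomorphic $\theta$ satisfying $\theta(z)=\overline{\theta(\bar z)}$, which is possible on the disc after fixing the branch at a real point where $|c|=1$, the choice $\mu=e^{\ii\theta/2}$ solves it. This yields the symmetric cover $\{U_\lambda\}$ with intrinsic lifts $\alpha_\lambda$.

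The third step passes to slice functions. Since $\alpha_\lambda$ is intrinsic, $\chi_\lambda=\Ii(\alpha_\lambda)$ is slice regular on $\Omega^\HH_{U_\lambda}$. Since $\alpha_\lambda$ takes values in $\HH_\CC^\ast$, $N(\alpha_\lambda)$ never vanishes, so $N(\chi_\lambda)$ is zero-free. A slice regular function with nowhere-vanishing symmetrization has no zeros, because a zero would force $N(\chi_\lambda)$ to vanish on its sphere. Hence $\chi_\lambda^{-*}$ exists. Because $\ast$-products are induced by pointwise products of stems, $F=\alpha_\lambda^{-1}H\alpha_\lambda$ translates directly into $f=\chi_\lambda^{-*}*h*\chi_\lambda$ on $\Omega_{U_\lambda}^\HH$; the statement writes $g$, which is presumably $h$.

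Finally, on an overlap $U_{\lambda\mu}$ both $\alpha_\lambda$ and $\alpha_\mu$ lift $\phi$, so $\alpha_\lambda=m\,\alpha_\mu$ with $m$ holomorphic and $\CC^\ast$-valued, by the kernel computation. The function $m=\alpha_\lambda\alpha_\mu^{-1}$ is intrinsic because both factors are. It is therefore a complex intrinsic nowhere-vanishing function, so $\mu_{\lambda\mu}=\Ii(m)$ is an invertible slice-preserving function and $\chi_\lambda=\mu_{\lambda\mu}\chi_\mu$.
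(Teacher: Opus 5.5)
Your proposal is correct and follows the same route as the paper: local holomorphic lifts of $\phi$ through the inner-automorphism map (all automorphisms of $\HH_\CC$ being inner), translation of $F=\alpha_\lambda^{-1}H\alpha_\lambda$ into $f=\chi_\lambda^{-*}*h*\chi_\lambda$, and on overlaps a central, intrinsic, nowhere-vanishing ratio that induces an invertible slice-preserving $\mu_{\lambda\mu}$. You also make explicit a step the paper only asserts with ``after choosing these liftings compatibly with the real structure'': the reflection $\overline{\beta(\bar z)}$ on $\bar V$ at nonreal points and the correction $e^{\ii\theta/2}$ at real points, both of which are valid.
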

Before performing the proof we clarify a detail of the statement.
\begin{remark}\label{rem:transition-centralizer}
The slice-preserving nature of the transition functions in
Proposition~\ref{prop:strong-local-conjugators} is a consequence of the fact that
the local conjugators are obtained from local liftings of one and the same
holomorphic section $\phi:D\to\Aut(\HH_\C)$.

If instead $\chi_\lambda$ and $\chi_\mu$ are two arbitrary local zero-free
conjugators satisfying
\[
f=\chi_\lambda^{-*}*g*\chi_\lambda,
\qquad
f=\chi_\mu^{-*}*g*\chi_\mu,
\]
then the ratios
\[
\chi_\mu*\chi_\lambda^{-*}
\qquad\text{and}\qquad
\chi_\lambda^{-*}*\chi_\mu
\]
need not be slice preserving. One only has
\[
(\chi_\mu*\chi_\lambda^{-*})*g
=
g*(\chi_\mu*\chi_\lambda^{-*}),
\]
and
\[
f*(\chi_\lambda^{-*}*\chi_\mu)
=
(\chi_\lambda^{-*}*\chi_\mu)*f.
\]
So arbitrary ratios lie in the $*$-centralizer of $g$ or $f$, respectively.
\end{remark}
\begin{proof}[proof of Proposition~\ref{prop:strong-local-conjugators}]
By Theorem~\ref{thm:strong-equivalence-sections}, strong equivalence yields a holomorphic
section of the orbit bundle
\[
\pi_{f,h}:V_{f,h}\to D,
\qquad
V_{f,h}=\{(z,\phi)\in D\times \Aut(\HH_\CC):\ F(z)=\phi(z)(H(z))\}.
\]
Equivalently, there exists a holomorphic map
\(\phi:D\to \Aut(\HH_\CC)
\)
such that
\[
F(z)=\phi(z)(H(z))
\qquad \forall z\in D.
\]

Since every automorphism of $\HH_\CC$ is inner, for every point of $D$ there exists
a symmetric open neighborhood $U_\lambda\subset D$ and a holomorphic map
\[
\alpha_\lambda:U_\lambda\to \HH_\CC^\ast
\]
such that
\[
\phi(z)(w)=\alpha_\lambda(z)^{-1}\,w\,\alpha_\lambda(z)
\qquad\text{for all }z\in U_\lambda,\ w\in \HH_\CC .
\]
After choosing these liftings compatibly with the real structure, they induce zero-free
slice regular functions
\[
\chi_\lambda:=\Ii(\alpha_\lambda)\in \Ss(\Omega_{U_\lambda}^{\HH}),
\]
and the identity
\[
F(z)=\alpha_\lambda(z)^{-1}H(z)\alpha_\lambda(z)
\]
translates into
\[
f=\chi_\lambda^{-*}*h*\chi_\lambda
\qquad\text{on }\Omega_{U_\lambda}^{\HH}.
\]

Now let $z\in U_{\lambda\mu}$.
Since $\alpha_\lambda(z)$ and $\alpha_\mu(z)$ induce the same inner automorphism of
$\HH_\CC$, their ratio belongs to the center of $\HH_\CC^\ast$, namely to $\CC^\ast$.
Hence there exists a holomorphic map
\[
\lambda_{\lambda\mu}:U_{\lambda\mu}\to \CC^\ast
\]
such that
\[
\alpha_\lambda=\lambda_{\lambda\mu}\,\alpha_\mu .
\]
Because the liftings are compatible with the real structure, $\lambda_{\lambda\mu}$ is
intrinsic, hence
\[
\mu_{\lambda\mu}:=\Ii(\lambda_{\lambda\mu})
\in \Ss_\RR(\Omega_{U_{\lambda\mu}}^\HH)^\ast .
\]
Therefore
\[
\chi_\lambda=\mu_{\lambda\mu}*\chi_\mu .
\]
Since $\mu_{\lambda\mu}$ is slice preserving, it belongs to the center of
$(\Ss(\Omega_{U_{\lambda\mu}}^\HH),*)$, hence it commutes with both $f$ and $g$.
The cocycle identity is immediate from the corresponding identity for the maps
$\lambda_{\lambda\mu}$.
\end{proof}

\begin{remark}\label{rem:weak-not-strong}
Assume that $f$ and $g$ are weakly equivalent but not strongly equivalent, and let
\[
f=\chi^{-*}*g*\chi
\]
for some semiregular conjugator $\chi\in \Mm(\Omega)$ given by
Theorem~\ref{thm:weak-equivalence-quaternionic}.
Then $\chi$ cannot be regular and nowhere vanishing near a point of the base where
the multiplicities of $\cdiv(f)$ and $\cdiv(g)$ differ.
Indeed, if $\chi$ were regular and zero-free on some circular neighborhood
$\Omega_U^\HH$, then its restriction would give a local regular conjugacy between
$f_{|\Omega_U^\HH}$ and $g_{|\Omega_U^\HH}$.
By Proposition~\ref{prop:strong-local-conjugators} and the orbit-bundle interpretation
of strong equivalence, this would imply strong equivalence on $\Omega_U^\HH$, hence
equality of the local central divisors, a contradiction.

Therefore, if $f$ and $g$ are weakly but not strongly equivalent, every global
semiregular conjugator must necessarily develop singular behaviour over the locus where
the two central divisors do not match.
In other words, the mismatch of $\cdiv(f)$ and $\cdiv(g)$ is precisely the obstruction
to the existence of a local zero-free regular conjugator.

In the concrete example discussed later, one function has a spherical central divisor and
the other has trivial central divisor; there one sees explicitly that every conjugator must
degenerate on that sphere.
\end{remark}





\subsection{Examples}

We conclude the section with three examples illustrating:
\begin{itemize}
\item weak equivalence versus strong equivalence;
\item the role of the central divisor;
\item the difference between a global holomorphic section, a global conjugating function,
and arbitrary local conjugators.
\end{itemize}
\begin{remark}\label{rem:star-product-evaluation}
In the quaternionic case, the $*$-product admits the following pointwise evaluation formula.
If $a,b\in \Ss(\Omega)$ and $x\in\Omega$, then
\[
(a*b)(x)=
\begin{cases}
0, & \text{if } a(x)=0,\\[4pt]
a(x)\,b\!\bigl(a(x)^{-1}xa(x)\bigr), & \text{if } a(x)\neq 0.
\end{cases}
\]
Equivalently, on the set where $a$ does not vanish, one may write
\[
(a*b)(x)=a(x)\,b\bigl(T_a(x)\bigr),
\qquad
T_a(x):=a(x)^{-1}xa(x).
\]

The map $T_a$ preserves the real part and the norm, hence it sends each sphere
$\mathbb S_x$ onto itself. This formula will be used in the examples
below to pass from identities involving $*$-products to pointwise information on
zeros and conjugators.
\end{remark}

\begin{example}\label{ex:weak-not-strong}
Consider the slice regular polynomials on $\HH$
\[
Q(x):=\Delta_i(x)\,i=(x^2+1)i,
\qquad
R(x):=(x-i)*(x-i)j.
\]
Since \( (x-i)*(x-i)=x^2-2xi-1\),
we get
\[
R(x)=\bigl(x^2-2xi-1\bigr)j=(x^2-1)j-2xk.
\]

Both functions are purely vectorial, hence
\[
\Tr(Q)\equiv 0,\qquad \Tr(R)\equiv 0.
\]
Moreover,
\[
N(Q)=N(\Delta_i)\,N(i)=\Delta_i^2,
\qquad
N(R)=N\bigl((x-i)*(x-i)\bigr)\,N(j)=\Delta_i^2.
\]
Therefore $Q$ and $R$ are weakly equivalent.

However, they are not strongly equivalent.
Indeed, the vector part of $Q$ has only one nonzero slice-preserving component, namely
$x^2+1$, so
\[
\cdiv(Q)=[\ii]+[-\ii].
\]
On the other hand,
\(R(x)=(x^2-1)j-2xk\),
so the slice-preserving components of $R$ are
\( (0,\ x^2-1, -2x)\),
and they have no nonconstant common divisor. Hence
\[
\cdiv(R)=0.
\]

Thus $Q$ and $R$ are weakly equivalent but not strongly equivalent.

By Theorem~\ref{thm:weak-equivalence-quaternionic}, there exists a semiregular
function $\chi$ such that
\[
Q=\chi^{-*}*R*\chi .
\]
Equivalently, on the open set where $\chi$ and $\chi^{-*}$ are regular,
\[
\chi*Q=R*\chi.
\]
One can write down explicitly two nontrivial
solutions of the previous equation
Namely, 
the functions
\[
\chi_1:=Q+R
\qquad\text{and}\qquad
\chi_2:=Qi-iR
\]
both satisfy
\[
\chi_\nu*Q=R*\chi_\nu,
\qquad \nu=1,2.
\]

Indeed, since \(Q\) and \(R\) are purely vectorial and
\[
N(Q)=N(R)=\Delta_i^2,
\]
one has
\[
Q*Q=-N(Q)=-\Delta_i^2,
\qquad
R*R=-N(R)=-\Delta_i^2.
\]
Therefore
\[
(Q+R)*Q=Q*Q+R*Q=-\Delta_i^2+R*Q
\]
and
\[
R*(Q+R)=R*Q+R*R=R*Q-\Delta_i^2,
\]
so that
\[
\chi_1*Q=R*\chi_1.
\]

For the second solution, observe first that
\[
Qi=(x^2+1)i^2=-(x^2+1),
\]
while
\[
iR=i\bigl((x^2-1)j-2xk\bigr)=(x^2-1)k+2xj.
\]
Hence
\[
\chi_2=Qi-iR
=-(x^2+1)-2xj-(x^2-1)k.
\]
A direct computation gives
\begin{align*}
\chi_2*Q
&=
\bigl(-(x^2+1)-2xj-(x^2-1)k\bigr)*(x^2+1)i \\
&=
-(x^2+1)^2\,i-(x^2-1)(x^2+1)\,j+2x(x^2+1)\,k,
\end{align*}
whereas
\begin{align*}
R*\chi_2
&=
\bigl((x^2-1)j-2xk\bigr)*
\bigl(-(x^2+1)-2xj-(x^2-1)k\bigr) \\
&=
-(x^2+1)^2\,i-(x^2-1)(x^2+1)\,j+2x(x^2+1)\,k.
\end{align*}
Thus also
\[
\chi_2*Q=R*\chi_2.
\]

In particular, both \(\chi_1\) and \(\chi_2\) yield semiregular conjugators:
\[
Q=\chi_\nu^{-*}*R*\chi_\nu
\]
wherever \(\chi_\nu^{-*}\) is defined.

Moreover, \(\chi_1\) and \(\chi_2\) are linearly independent, since
\(\chi_1\) is purely vectorial whereas \(\chi_2\) has nonzero scalar part
\(- (x^2+1)\).

We now show that every such conjugator must have a pathology on the sphere
\(\mathbb S_i\), and in fact already at the point \(i\).
Let \(J\in\mathbb S_{\HH}\setminus\{i\}\).
Since \(J^2=-1\), one has
\[
R(J)=\bigl(J^2-2Ji-1\bigr)j=-2(1+Ji)j.
\]
In particular, \(R(J)\neq 0\) for every \(J\neq i\).
Set
\[
u:=1+Ji.
\]
A direct computation gives
\[
Ju=J+J^2i=J-i=-ui.
\]
Hence
\[
u^{-1}Ju=-i,
\]
and therefore
\[
T_R(J):=R(J)^{-1}JR(J)
=(uj)^{-1}J(uj)
=j^{-1}(u^{-1}Ju)j
=j^{-1}(-i)j
=i.
\]
So \(T_R(J)=i\) for every \(J\in\mathbb S_{\HH}\setminus\{i\}\).

Assume now that \(\chi\) is regular at \(i\).
Evaluating the identity \(\chi*Q=R*\chi\) at such a \(J\), and using the
evaluation formula for the $*$-product, we obtain
\[
0=(\chi*Q)(J)=(R*\chi)(J)=R(J)\,\chi\!\bigl(T_R(J)\bigr)=R(J)\chi(i).
\]
Since \(R(J)\neq 0\), it follows that
\[
\chi(i)=0.
\]
Therefore, if a conjugator is regular at \(i\), then it must vanish at \(i\);
equivalently, \(\chi^{-*}\) has a pole at \(i\).
If instead \(\chi\) is not regular at \(i\), then it already has a pole there.

In either case, every semiregular conjugator between \(Q\) and \(R\) fails to be
regular and invertible at \(i\). In particular, no conjugator can be regular and
nowhere vanishing on a circular neighborhood of the sphere \(\mathbb S_i\).
This gives a concrete manifestation of the obstruction produced by the mismatch
between \(\cdiv(Q)\) and \(\cdiv(R)\).
\end{example}

\begin{example}\label{ex:global-conjugating-function}
We now exhibit a situation in which strong equivalence is realized by a single global
zero-free conjugating function.

Let
\[
F(z):=z(i+j)-k,
\qquad
H(z):=z(i+j)+\frac{1}{\sqrt{2}}(i-j),
\qquad z\in D,
\]
and let
\(f:=\Ii(F), h:=\Ii(H)\).
Both $F$ and $H$ are intrinsic holomorphic stem functions.

Since both functions are purely vectorial, one has
\[
\Tr(f)=\Tr(h)=0.
\]
A direct computation gives
\[
N(F)(z)=2z^2+1,
\qquad
N(H)(z)=2z^2+1,
\]
hence \(N(f)\equiv N(h)\).
Moreover, the components of $F$ and $H$ have no common zero, so
\[
\cdiv(f)=\cdiv(h)=0.
\]
Therefore $f$ and $h$ are strongly equivalent.

Now set
\[
\alpha:=\frac{1}{\sqrt{2}}+\frac{1}{2}(i+j)\in\HH^\ast.
\]
Then \(N(\alpha)=1\), and a direct computation shows that
\[
F(z)\alpha=\alpha H(z)
\qquad \forall z\in D.
\]
Let \(\chi:=\Ii(\alpha)\), which is just the constant slice regular function with value
\(\alpha\). Since \(\chi\) is zero-free, the previous identity translates into
\[
f=\chi^{-*}*h*\chi
\qquad\text{on }\Omega.
\]

Thus, in this case, the holomorphic section of the orbit bundle is induced by a single
global zero-free slice regular conjugator.
\end{example}

The next example shows that, even in the strongly equivalent case, two arbitrary local
conjugators need not differ by a slice-preserving factor.
\begin{example}\label{ex:local-conjugators-centralizer}
Consider
\[
F(z):=z^2i+\sqrt{2}\,zj+k,
\qquad
H_1(z):=(z^2-1)i+2z\,j,
\qquad z\in D,
\]
and let
\[
f:=\Ii(F),\qquad h_1:=\Ii(H_1).
\]
Again, both functions are purely vectorial, so
\[
\Tr(f)=\Tr(h_1)=0.
\]
Moreover,
\[
N(F)(z)=z^4+2z^2+1=(z^2+1)^2,
\]
and
\[
N(H_1)(z)=(z^2-1)^2+(2z)^2=z^4+2z^2+1=(z^2+1)^2,
\]
hence \(N(f)=N(h_1)\).

The slice-preserving components of \(f\) are
\((z^2, \sqrt{2}\,z, 1)\),
while those of \(g_1\) are
\(z^2-1, 2z, 0\).
In both cases the common divisor is trivial, so
\[
\cdiv(f)=\cdiv(h_1)=0.
\]
Therefore \(f\) and \(h_1\) are strongly equivalent.

Now define
\begin{align*}
A_1(z)&:=F(z)+H_1(z)
=(2z^2-1)i+(\sqrt{2}+2)z\,j+k,\\
A_2(z)&:=z+\Bigl(1-\frac{\sqrt{2}}{2}\Bigr)(i-k).
\end{align*}
A direct computation shows that
\[
F(z)A_m(z)=A_m(z)H_1(z)
\qquad\text{for }m=1,2.
\]
Hence, wherever \(A_m\) is invertible, the induced slice function
\[
\chi_m:=\Ii(A_m)
\]
is a local conjugator between \(f\) and \(g_1\).

Set
\[
U_m:=D\setminus V\bigl(N(A_m)\bigr),\qquad m=1,2.
\]
Then \(\chi_m\) is zero-free on \(\Omega_{U_m}^\HH\), and
\[
f=\chi_m^{-*}*g_1*\chi_m
\qquad\text{on }\Omega_{U_m}^\HH.
\]

On the overlap \(\Omega_{U_1\cap U_2}^\HH\), consider
\[
h_{12}:=\chi_1^{-*}*\chi_2.
\]
Using the two conjugacy identities, one obtains
\[
h_{12}*g_1=g_1*h_{12}.
\]
Thus \(h_{12}\) belongs to the \( * \)-centralizer of \(g_1\).

This example illustrates Remark~\ref{rem:transition-centralizer}: although the special
transition functions arising from a chosen lifting of the orbit section are
slice-preserving and central, the ratio of two arbitrary local conjugators need only lie
in the \( * \)-centralizer, and in general there is no reason for it to be
slice-preserving.
\end{example}

\section{Models and semi-models}\label{sec:models}

In this section we introduce the notion of model for a slice regular function and study when such a
model exists and to what extent it is unique. We then introduce a more flexible notion of semi-model,
which is the natural object arising from the alignment procedure based on automorphisms and
$*$-conjugation.

We start with a preliminary result.

\begin{lemma}\label{lem:symmetrization-two-squares}
Let $\Omega=\Omega_D^A$ be a basic domain, let $f\in \Ss(\Omega)$, and fix
$I\in\mathbb S_A$.
Then there exists a function
\[
h\in \Ss_I(\Omega)
\]
such that
\(N(h)=N(f)\).
Equivalently, writing
\[
h=h_1+h_2I,
\qquad h_1,h_2\in \Ss_\RR(\Omega),
\]
one has
\[
N(f)=h_1^2+h_2^2.
\]

If, moreover, $f$ has no real or spherical zeros, then $h$ may be chosen in such a way that
it has no real or spherical zeros. Equivalently, all zeros of $h$ are isolated nonreal
and lie in the slice $\CC_I$.
\end{lemma}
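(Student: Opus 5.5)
We work at the level of stem functions on the simply connected base $D=D_\Omega$. Let $N(f)=\Ii(\nu)$, where $\nu:D\to\CC$ is an intrinsic holomorphic function; we may assume $\nu\not\equiv 0$, since otherwise $h=0$ works. We want intrinsic holomorphic $H_1,H_2:D\to\CC$ with $H_1^2+H_2^2=\nu$. The point is to factor the sum of two squares over $\CC$:
\[
H_1^2+H_2^2=(H_1+\ii H_2)(H_1-\ii H_2).
\]
So it suffices to write $\nu=\psi\cdot\psi^\sharp$, where $\psi$ is holomorphic on $D$ and $\psi^\sharp(z):=\overline{\psi(\bar z)}$. Then we set
\[
H_1:=\tfrac12(\psi+\psi^\sharp),\qquad H_2:=\tfrac{1}{2\ii}(\psi-\psi^\sharp).
\]
Both are intrinsic, and $H_1+\ii H_2=\psi$, $H_1-\ii H_2=\psi^\sharp$. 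Hence $h:=\Ii(H_1)+\Ii(H_2)I\in\Ss_I(\Omega)$ satisfies $N(h)=h_1^2+h_2^2=N(f)$.

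To construct $\psi$, we analyse the zero divisor of $\nu$, which is invariant under $z\mapsto\bar z$.
\begin{itemize}
\item A nonreal zero $p$ of multiplicity $m$ is paired with $\bar p$, also of multiplicity $m$. We assign all $m$ copies to $\psi$ at $p$ only (choosing $p$ in the upper half-plane, say), so that $\psi^\sharp$ carries them at $\bar p$.
\item At a real zero $r$ we need even multiplicity, so that $\psi$ and $\psi^\sharp$ can each carry half.
\end{itemize}
The even multiplicity at real points is where the hypothesis enters. By the earlier observation, $\nu(r)=\sum_\ell F_\ell(r)^2$ with $F_\ell(r)$ real. If $k=\min_\ell \ord_r F_\ell$, write $F_\ell=(z-r)^kG_\ell$ with $G_\ell(r)$ real and not all zero. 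Then $\nu=(z-r)^{2k}\sum_\ell G_\ell^2$, and the last factor is positive at $r$, so $\ord_r\nu=2k$ is even.

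With this divisor in hand:
\begin{enumerate}
\item By the Weierstrass theorem on the domain $D$ (any open set in $\CC$ suffices), choose $\psi_0$ holomorphic with exactly the divisor just described.
\item The quotient $u:=\nu/(\psi_0\psi_0^\sharp)$ is then zero-free and holomorphic, and it is intrinsic, since $\nu$ and $\psi_0\psi_0^\sharp$ both are.
\item We need an intrinsic square root of $u$ of the form $w w^\sharp$, for instance $u=w^2$ with $w$ intrinsic; this is the main obstacle. Since $D$ is simply connected (or a union of two conjugate simply connected pieces when $D\cap\RR=\emptyset$, handled piecewise by reflection), $u=e^{g}$ for some holomorphic $g$.
\item Replacing $g$ by $\tfrac12(g+g^\sharp)$ is legitimate: $e^{g^\sharp}=u^\sharp=u$, so $g-g^\sharp\in 2\pi\ii\ZZ$ is constant, and the correction changes $g$ only by a constant in $\pi\ii\ZZ$, hence $e^{g}$ by a sign. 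We fix that sign by adjusting $g$ by $\pi\ii$ if needed.
\item The sign is determined at a real point: there $u>0$, because $\nu\ge0$ near its real zeros (a sum of real squares) and $\psi_0\psi_0^\sharp=|\psi_0|^2>0$.
\item Then $w:=e^{g/2}$ is intrinsic with $w^2=u$, and $\psi:=\psi_0w$ gives $\psi\psi^\sharp=\nu$.
\end{enumerate}
In the case $D\cap\RR=\emptyset$, we instead define $\psi$ on the upper component freely and set it on the lower component so that $\psi\psi^\sharp=\nu$ holds there as well; the positivity issue disappears.

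For the second assertion, suppose $f$ has no real or spherical zeros. By the parity argument, real zeros of $\nu$ then come only from isolated real zeros of $f$, which are excluded, so $\nu$ has no real zeros at all. At nonreal points, the construction placed the zeros of $\psi$ only at upper-half-plane points. Hence $\psi$ and $\psi^\sharp$ have no common zeros, so $H_1,H_2$ have no common zeros, and $h$ has no real or spherical zeros. Its remaining zeros are then isolated nonreal zeros on $\CC_I$, by the earlier remark on $\CC_I$-preserving functions.
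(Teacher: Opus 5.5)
Your proof is correct, but it is organized differently from the paper's. The paper proves that the real zeros of $\Phi$ have even order exactly as you do. It then gets the first assertion by citing \cite[Theorem~3.2]{AltavillaAMPA}, which gives a $\CC_J$-preserving $g$ with $N(g)=N(f)$, after which $J$ is replaced by $I$. Only for the second assertion does the paper build $h$ by hand, as $\sigma*\prod_\lambda(x-q_\lambda)^{*m_\lambda}$ with $\Phi=S\prod_\lambda\Delta_{z_\lambda}^{m_\lambda}$ and $\sigma^2=S$.

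Under your dictionary $\psi=H_1+\ii H_2$, that product is exactly your $\psi=\psi_0w$ with $\psi_0=\prod_\lambda(z-z_\lambda)^{m_\lambda}$. So you are running the paper's second construction in full generality. Splitting each even real multiplicity equally between $\psi$ and $\psi^\sharp$ gives the first assertion directly, with no external theorem and no change of slice, and both assertions come out of the single factorization $\nu=\psi\psi^\sharp$.

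Your version also makes explicit two points the paper passes over. First, choosing $\psi_0$ by the Weierstrass theorem for a non-symmetric divisor handles infinitely many zeros without any question of convergence of the products. Second, you say why the zero-free factor has an intrinsic square root: it is positive on $D\cap\RR$. A zero-free intrinsic function such as $-1$ has no intrinsic square root; the paper's $S$ is positive on $D\cap\RR$ for the same reason as your $u$, but this is not stated.

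Two small repairs. First, drop the sentence about adjusting $g$ by $\pi\ii$. Adding a purely imaginary constant to $g$ does not change $\tfrac12(g+g^\sharp)$ at all, and adding $\pi\ii$ after symmetrizing destroys intrinsicness. What actually fixes the sign is your Step~5: $\tfrac12(g+g^\sharp)$ is real on $D\cap\RR$, so $e^{(g+g^\sharp)/2}=\pm u$ is positive there, and $u>0$ there forces the sign $+$.

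Second, in the case $D\cap\RR=\emptyset$, $\psi$ cannot be chosen completely freely on $D^+$, since it must divide $\nu$ there. Take for instance $\psi:=\nu$ on $D^+$ and $\psi:=1$ on $D^-$. This gives $\psi\psi^\sharp=\nu$ and also keeps the zero sets of $\psi$ and $\psi^\sharp$ disjoint, as your second assertion requires.
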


\begin{proof}
If $N(f)\equiv 0$, the conclusion is trivial: it is enough to take $h\equiv 0$.
So assume $N(f)\not\equiv 0$.

Since $N(f)$ is slice preserving, there exists an intrinsic holomorphic function
\[
\Phi:D\to\CC
\]
such that
\[
N(f)=\Ii(\Phi).
\]

We first prove that every real zero of $\Phi$ has even order.
Write
\[
f=f_0+f_1e_1+\dots+f_ne_n,
\qquad f_\ell\in \Ss_\RR(\Omega),
\]
so that
\[
N(f)=f_0^2+\dots+f_n^2.
\]
Let $x_0\in D\cap\RR$ be such that $\Phi(x_0)=0$, equivalently $N(f)(x_0)=0$.
Then
\[
f_0(x_0)=\dots=f_n(x_0)=0.
\]
Let $F_\ell:D\to\CC$ be the intrinsic holomorphic stem inducing $f_\ell$.
For each $\ell$ we may write
\[
F_\ell(z)=(z-x_0)^{m_\ell}G_\ell(z),
\]
where either $F_\ell\equiv 0$ or $G_\ell(x_0)\neq 0$.
Set
\[
m:=\min\{m_\ell:\ \ell=0,\dots,n\}.
\]
Then
\[
\Phi(z)=F_0(z)^2+\dots+F_n(z)^2
=(z-x_0)^{2m}\bigl(G_0(z)^2+\dots+G_n(z)^2\bigr).
\]
Since $x_0\in\RR$ and each $G_\ell$ is intrinsic, all the values $G_\ell(x_0)$ are real.
Moreover, at least one of the $G_\ell(x_0)$ with $m_\ell=m$ is nonzero, hence
\[
G_0(x_0)^2+\dots+G_n(x_0)^2>0.
\]
Therefore $\ord_{x_0}\Phi=2m$ is even.

It follows that $\Phi$ is intrinsic holomorphic, nonnegative on $D\cap\RR$, and all its
real zeros have even order. By \cite[Theorem~3.2]{AltavillaAMPA}, there exists a
one-slice-preserving function
\[
g\in \Ss_J(\Omega)
\]
for some $J\in\mathbb S_A$ such that
\(N(g)=N(f)\).
Write
\[
g=a+bJ,
\qquad a,b\in \Ss_\RR(\Omega).
\]
Now define
\[
h:=a+bI\in \Ss_I(\Omega).
\]
Then
\[
N(h)=a^2+b^2=N(g)=N(f).
\]
This proves the first part of the statement. Writing $h=h_1+h_2I$ with
$h_1:=a$ and $h_2:=b$, we also obtain
\[
N(f)=h_1^2+h_2^2.
\]

Assume now, in addition, that $f$ has no real or spherical zeros.
Then $N(f)$ has no real zeros. Hence the divisor of $\Phi$ has the form
\[
\operatorname{div}(\Phi)=\sum_\lambda m_\lambda\bigl([z_\lambda]+[\bar z_\lambda]\bigr),
\qquad
z_\lambda=\alpha_\lambda+\ii\beta_\lambda\in D\cap\CC^+,
\]
with $\beta_\lambda>0$.
Fix the chosen imaginary unit $I$, and for every $\lambda$ set
\[
q_\lambda:=\alpha_\lambda+\beta_\lambda I\in \Omega\cap\CC_I^+.
\]

By Weierstrass factorization on the simply connected base $D$, there exists a zero-free
intrinsic holomorphic function
\[
S:D\to\CC
\]
such that
\[
\Phi=S\prod_\lambda \Delta_{z_\lambda}^{\,m_\lambda},
\]
where
\[
\Delta_{z_\lambda}(z)=(z-z_\lambda)(z-\bar z_\lambda).
\]
Passing to slice functions, this means that there exists a zero-free slice-preserving
function
\[
s:=\Ii(S)\in \Ss_\RR(\Omega)
\]
such that
\[
N(f)=s\prod_\lambda \Delta_{q_\lambda}^{\,m_\lambda}.
\]

Since $S$ is zero-free on the simply connected domain $D$ and is intrinsic, it admits an
intrinsic holomorphic square root. Therefore $s$ admits a slice-preserving square root
\[
\sigma\in \Ss_\RR(\Omega),
\qquad \sigma^2=s.
\]

Now define
\[
h:=\sigma * \prod_\lambda (x-q_\lambda)^{*m_\lambda}.
\]
Since all factors take values in the same slice $\CC_I$, the function $h$ belongs to
$\Ss_I(\Omega)$. Moreover,
\[
N(h)=N(\sigma)\prod_\lambda N(x-q_\lambda)^{m_\lambda}
=\sigma^2\prod_\lambda \Delta_{q_\lambda}^{\,m_\lambda}
=s\prod_\lambda \Delta_{q_\lambda}^{\,m_\lambda}
=N(f).
\]

By construction, on each sphere $\mathbb S_{q_\lambda}$ the function $h$ has exactly one
isolated zero, namely $q_\lambda$, of isolated multiplicity $m_\lambda$. In particular,
$h$ has no real or spherical zeros.

This concludes the proof.
\end{proof}

\begin{remark}\label{rem:two-squares-not-canonical}
The functions $h_1,h_2$ given by Lemma~\ref{lem:symmetrization-two-squares} are in
general highly non-unique and should not be confused with the slice-preserving
components of $f$.

Indeed, the lemma only uses the slice-preserving function $N(f)$, not the original
function $f$ itself. In particular, different slice regular functions with the same
symmetrization lead to the same existence statement for $h_1,h_2$.

Moreover, even for a fixed $f$, the pair $(h_1,h_2)$ is far from canonical.
When $\Omega\cap\RR\neq\emptyset$, the construction depends on the choice of a
one-slice-preserving function $g$ with $N(g)=N(f)$.
When $\Omega\cap\RR=\emptyset$, the construction depends on an arbitrary decomposition
of the inducing holomorphic function $\Phi$ on $D^+$ as a sum of two squares.
Thus the pair $(h_1,h_2)$ is an auxiliary device attached to $N(f)$, rather than a
structure intrinsically encoded in $f$.
\end{remark}

\begin{example}\label{ex:two-squares-nonunique}
Let
\(\Omega:=\{x\in\HH:\ |x|<1\}\)
and consider the constant slice regular function
\(f(x)\equiv i\).
Then\(
N(f)\equiv 1\).

A first possible choice in Lemma~\ref{lem:symmetrization-two-squares} is simply
\[
h_1(x)\equiv 1,
\qquad
h_2(x)\equiv 0.
\]
Indeed,
\[
N(f)=1=h_1^2+h_2^2.
\]

However, a completely different choice is
\[
h_1(x):=\frac{1-x^2}{1+x^2},
\qquad
h_2(x):=\frac{2x}{1+x^2}.
\]
Since $1+x^2$ has no zeros on $\Omega$, both $h_1$ and $h_2$ are slice-preserving
regular functions on $\Omega$, and
\[
h_1(x)^2+h_2(x)^2
=
\frac{(1-x^2)^2+4x^2}{(1+x^2)^2}
=
\frac{(1+x^2)^2}{(1+x^2)^2}
=
1
=
N(f).
\]

Thus even in the simplest possible case the pair $(h_1,h_2)$ is not unique and has
no direct relation with the original function $f$.
\end{example}

\subsection{Models and existence results}

\begin{definition}\label{def:model}
Let \(f\in \Ss(\Omega)\).
A \emph{model} for \(f\) is a slice regular function \(g\in \Ss(\Omega)\) such that:
\begin{enumerate}
\item \(g\) is strongly equivalent to \(f\);
\item all isolated nonreal zeros of \(g\) lie in a single slice \(\CC_I\), for some \(I\in\mathbb S_A\).
\end{enumerate}
\end{definition}

\begin{remark}
A model is therefore an aligned representative inside the strong-equivalence class of \(f\).
\end{remark}

If \(f\) is slice preserving or $\CC_I$-preserving for some $I\in\SF_A$, then \(f\) is itself a model. 

\begin{remark}\label{rem:div-of-slicepres}
If \(h\in \Ss_\RR(\Omega)\), we denote by
\[
\operatorname{div}(h)
\]
the divisor of the intrinsic holomorphic stem inducing \(h\) on the base \(D=D_\Omega\).

Moreover, if \(I\in\mathbb S_A\) and
\[
g=f_0+hI
\]
is a \(\CC_I\)-preserving function with \(h\not\equiv 0\), then the vector part of \(g\) is
\(hI\), hence
\[
\cdiv(g)=\operatorname{div}(h).
\]
\end{remark}

\begin{theorem}[Criterion for one-slice-preserving models]
\label{thm:criterion-one-slice-model}
Assume that \(f=f_0+f_v\in \Ss(\Omega)\) is not slice preserving, and fix \(I\in\mathbb S_A\).
Then the following are equivalent:
\begin{enumerate}
\item \(f\) is strongly equivalent to a \(\CC_I\)-preserving function;
\item there exists \(h\in \Ss_\RR(\Omega)\), \(h\not\equiv 0\), such that
\[
h^2=N(f_v)
\qquad\text{and}\qquad
\operatorname{div}(h)=\cdiv(f).
\]
\end{enumerate}
In this case, the function
\[
g:=f_0+hI
\]
is \(\CC_I\)-preserving, strongly equivalent to \(f\), and therefore is a model for \(f\).
\end{theorem}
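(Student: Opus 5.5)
The proof reduces both implications to Definition~\ref{def:weak-strong}: strong equivalence means equality of $\Tr$, $N$ and $\cdiv$. For a $\CC_I$-preserving function $g=g_0+hI$ these invariants are easy to compute: $\Tr(g)=2g_0$, $N(g)=g_0^2+h^2$, and $\cdiv(g)=\operatorname{div}(h)$ by Remark~\ref{rem:div-of-slicepres}, valid when $h\not\equiv 0$. Theorem~\ref{thm:strong-equivalence-sections} is not needed, since the statement is purely about invariants.

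\textbf{(1)$\Rightarrow$(2).} Let $g=g_0+hI$ be $\CC_I$-preserving and strongly equivalent to $f$.
\begin{itemize}
\item Equality of traces gives $g_0=f_0$.
\item Equality of symmetrizations $f_0^2+N(f_v)=g_0^2+h^2$ then yields $h^2=N(f_v)$, where $N(f_v)=f_1^2+\dots+f_n^2$.
\item We need $h\not\equiv 0$. If $h\equiv 0$, then $g$ is slice preserving. By the last clause of Theorem~\ref{thm:strong-equivalence-sections}, or directly since $\cdiv$ of a nonzero vector part is a genuine divisor while $f$ has $f_v\not\equiv 0$, this would force $f=g$, contradicting the assumption that $f$ is not slice preserving. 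The cleanest route is to invoke that clause of the theorem.
\item With $h\not\equiv 0$, Remark~\ref{rem:div-of-slicepres} gives $\operatorname{div}(h)=\cdiv(g)=\cdiv(f)$.
\end{itemize}

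\textbf{(2)$\Rightarrow$(1), and the final clause.} Given $h$ as in (2), set $g:=f_0+hI$. This function is slice regular and $\CC_I$-preserving, since $h\in\Ss_\RR(\Omega)$. Its invariants are $\Tr(g)=2f_0=\Tr(f)$, $N(g)=f_0^2+h^2=f_0^2+N(f_v)=N(f)$, and $\cdiv(g)=\operatorname{div}(h)=\cdiv(f)$. Hence $g$ is strongly equivalent to $f$.

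The model property follows from the remark on $\CC_I$-preserving functions: every isolated nonreal zero of $g$ lies in $\CC_I$. This is the quaternionic/octonionic analogue of the fact that $g$ restricted to $\CC_I$ is a $\CC_I$-valued holomorphic function, and its zeros on other slices come only from spheres. So $g$ satisfies both conditions of Definition~\ref{def:model}.

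\textbf{Expected obstacle.} The only delicate point is the nondegeneracy $h\not\equiv 0$ in (1)$\Rightarrow$(2), because $\cdiv$ of an identically zero vector part is not governed by Remark~\ref{rem:div-of-slicepres}. I handle it as above, via the slice-preserving case of Theorem~\ref{thm:strong-equivalence-sections}.

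A secondary check: $h^2=N(f_v)$ is an identity among slice-preserving functions. Both it and the divisor identity should be read on the intrinsic stems on $D$, where $\operatorname{div}(h)$ is exactly the order of the stem of $h$. Then $\cdiv(g)=\operatorname{div}(H)$ holds for the single nonzero component $H$ of the vector part, so no further work is required.
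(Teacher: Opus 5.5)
Your proposal is correct and follows the paper's proof essentially step by step. In both directions you compare $\Tr$, $N$ and $\cdiv$ of $g=g_0+hI$ with those of $f$, using Remark~\ref{rem:div-of-slicepres} for $\cdiv(g)=\operatorname{div}(h)$. The only difference is that you explicitly justify $h\not\equiv 0$ via the slice-preserving clause of Theorem~\ref{thm:strong-equivalence-sections}, whereas the paper simply asserts this step from the hypothesis that $f$ is not slice preserving.
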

\begin{proof}
Assume first that \(f\) is strongly equivalent to a \(\CC_I\)-preserving function \(g\).
Then \(g\) has the form
\[
g=g_0+hI
\]
for some \(g_0,h\in \Ss_\RR(\Omega)\).
Since \(f\) is not slice preserving, also \(g\) cannot be slice preserving, hence
\(h\not\equiv 0\).

Because \(f\) and \(g\) are strongly equivalent, they have the same trace and the same norm.
From \(2f_0=\Tr(f)=\Tr(g)=2g_0\),
we obtain
\(g_0=f_0\).
Moreover,
\[
N(g)=g_0^2+h^2=f_0^2+h^2,
\]
while
\[
N(f)=f_0^2+N(f_v).
\]
Hence \(N(g)=N(f)\) implies
\(h^2=N(f_v)\).
Finally, since \(g\) is not slice preserving, strong equivalence also gives equality of the
central divisors:
\[
\cdiv(f)=\cdiv(g).
\]
By Remark~\ref{rem:div-of-slicepres}, this means
\[
\cdiv(f)=\operatorname{div}(h).
\]

Conversely, assume that there exists \(h\in \Ss_\RR(\Omega)\), \(h\not\equiv 0\), such that
\[
h^2=N(f_v)
\qquad\text{and}\qquad
\operatorname{div}(h)=\cdiv(f).
\]
Define
\[
g:=f_0+hI.
\]
Then \(g\) is \(\CC_I\)-preserving and, since \(h\not\equiv 0\), it is not slice preserving.

Its trace is
\(\Tr(g)=2f_0=\Tr(f)\),
and its norm is \(N(g)=f_0^2+h^2=f_0^2+N(f_v)=N(f)\).
Again by Remark~\ref{rem:div-of-slicepres},
\[
\cdiv(g)=\operatorname{div}(h)=\cdiv(f).
\]
Therefore \(f\) and \(g\) have the same invariants \(\Tr\), \(N\), and \(\cdiv\), so they are
strongly equivalent.

Since \(g\) is \(\CC_I\)-preserving, all its isolated nonreal zeros lie in the slice \(\CC_I\).
Hence \(g\) is a model for \(f\).
\end{proof}

Therefore, we have obtained that
if \(f\) is strongly equivalent to a \(\CC_I\)-preserving function, then \(f\) admits a model.
Another large class of slice regular functions admitting a model is the following.

\begin{theorem}[Purely vectorial model theorem]
\label{thm:purely-vectorial-model}
Every purely vectorial slice regular function \(u\in \Ss(\Omega)\) admits a model.
\end{theorem}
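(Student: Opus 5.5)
The plan is to build the model explicitly as a function whose vector part lies in the quaternionic subalgebra generated by $i,j$. I will look for a model of the form
\[
g=d\,(h_1 i+h_2 j),\qquad d,h_1,h_2\in\Ss_\RR(\Omega).
\]
The point is the identity $h_1 i+h_2 j=(h_1+h_2k)*i$, which holds because $ki=j$. Right $*$-multiplication by the constant $i$ is just pointwise right multiplication, so the isolated nonreal zeros of $h_1i+h_2j$ are exactly those of the $\CC_k$-preserving function $h_1+h_2k$, and hence lie in $\CC_k$. Multiplying by the slice-preserving factor $d$ only adds real and spherical zeros. Since $i,j,k$ generate $\HH\subset A$, this works for $A=\HH$ and $A=\OO$ simultaneously. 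If $u\equiv 0$ there is nothing to prove, so assume $u\not\equiv0$; then $u$ is not slice preserving and, by Theorem~\ref{thm:strong-equivalence-sections}, strong equivalence means equality of $\Tr$, $N$ and $\cdiv$.

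\emph{Step 1: separate the central divisor.} The divisor $\cdiv(u)$ is effective and invariant under $z\mapsto\bar z$. By Weierstrass factorization on the simply connected base, choosing conjugation-symmetric factors, there is an intrinsic holomorphic $\Delta$ with $\operatorname{div}(\Delta)=\cdiv(u)$. Dividing all components of $U=U_v$ by $\Delta$ gives
\[
u=d\,u',\qquad d=\Ii(\Delta),
\]
where $u'$ is purely vectorial with $\cdiv(u')=0$ and $N(u)=d^2N(u')$. Since the scalar part of $u'$ vanishes, Remark~\ref{rem_sph_real_zeroes} shows that $u'$ has no real or spherical zeros: these would be common zeros of all of its components.

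\emph{Step 2: two-squares decomposition with coprime summands.} Suppose first that $N(u')\not\equiv0$. Apply Lemma~\ref{lem:symmetrization-two-squares} to $u'$ with $I=k$, using its second part. This gives $h=h_1+h_2k\in\Ss_k(\Omega)$ with $N(h)=h_1^2+h_2^2=N(u')$ and no real or spherical zeros. The key consequence is that $h_1,h_2$ have no common zeros on the base: a common zero of their stems would be a real or spherical zero of $h$. Hence $\operatorname{div}(\gcd(h_1,h_2))=0$.

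If instead $N(u')\equiv0$, which forces $\Omega\cap\RR=\emptyset$, I take $H_1\equiv1$ and $H_2\equiv\ii$ on the upper component $D^+$, and extend both intrinsically to the conjugate component. This again gives a coprime pair with $h_1^2+h_2^2=0$.

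\emph{Step 3: verification.} Set $g:=d(h_1i+h_2j)$. Then:
\begin{itemize}
\item $\Tr(g)=0=\Tr(u)$, since $g$ is purely vectorial;
\item $N(g)=d^2(h_1^2+h_2^2)=d^2N(u')=N(u)$;
\item $\cdiv(g)=\operatorname{div}(d)+\operatorname{div}(\gcd(h_1,h_2))=\cdiv(u)$.
\end{itemize}
So $g$ is strongly equivalent to $u$. By the opening remark, every isolated nonreal zero of $g$ is an isolated zero of $h_1+h_2k$ and lies in $\CC_k$. Hence $g$ is a model for $u$.

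I expect the main obstacle to be Step 2, namely guaranteeing that $h_1$ and $h_2$ share no common zeros, since otherwise $\cdiv(g)$ would be too large. This is exactly why the central divisor must be stripped off first: only then does the zero-free-sphere clause of Lemma~\ref{lem:symmetrization-two-squares} apply to $u'$.
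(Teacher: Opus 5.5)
Your proof is correct and follows the paper's proof. Both factor out $\cdiv(u)$ and apply the second part of Lemma~\ref{lem:symmetrization-two-squares} to get a one-slice-preserving $h$ with no real or spherical zeros, hence with coprime components. Both then right-multiply by a constant unit orthogonal to that slice and multiply back by $d$; the paper uses $e_1$ and $e_2$ where you use $k$ and $i$, and your extra case $N(u')\equiv 0$ is one the paper excludes by its standing assumption $N\not\equiv 0$.

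One caveat, which the paper's proof shares: for $A=\OO$, right $*$-multiplication by a constant is not pointwise right multiplication. One gets $(h*c)(x)=F_1c+J(F_2c)$ rather than $(F_1+JF_2)c$, and these differ by an associator that is nonzero for $J\notin\HH_0$. The alignment of zeros is therefore better justified directly: $u+Jw=0$ with $u,w\in\Span\{i,j\}$ and $w\neq 0$ forces $J=-uw^{-1}=\pm k$.
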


\begin{proof}
If \(u\equiv 0\), then \(u\) is itself a model. So assume \(u\not\equiv 0\).

Write
\[
u=u_1e_1+\cdots+u_ne_n,
\qquad u_\ell\in \Ss_\RR(\Omega),
\]
and let
\[
U_\ell:D\to\CC
\]
be the intrinsic holomorphic stem inducing \(u_\ell\).
By definition of \(\cdiv(u)\), there exists an intrinsic holomorphic function
\[
\delta:D\to\CC
\]
whose divisor is \(\cdiv(u)\), and intrinsic holomorphic functions
\[
V_1,\dots,V_n:D\to\CC
\]
such that
\[
U_\ell=\delta\,V_\ell,
\qquad \ell=1,\dots,n,
\]
and such that the curve
\[
V:=(V_1,\dots,V_n)
\]
has trivial divisor. Let
\[
d:=\Ii(\delta)\in \Ss_\RR(\Omega),
\qquad
v:=\Ii(V_1)e_1+\cdots+\Ii(V_n)e_n\in \Ss(\Omega).
\]
Then \(v\) is purely vectorial,
\[
u=d\,v,
\]
and
\[
\cdiv(v)=0.
\]

Since \(v\) is purely vectorial and \(\cdiv(v)=0\), it has no real or spherical zeros.
Fix the imaginary unit \(e_1\). By Lemma~\ref{lem:symmetrization-two-squares}, there exists
a function
\[
h=h_1+h_2e_1\in \Ss_{e_1}(\Omega)
\]
such that
\[
N(h)=N(v),
\]
and, since \(v\) has no real or spherical zeros, \(h\) may be chosen so as to have no
real or spherical zeros.

Now define
\[
\widetilde v:=h\,e_2=h_1e_2+h_2e_3.
\]
Then \(\widetilde v\) is again purely vectorial. Moreover, since right multiplication by the
nonzero constant \(e_2\) preserves the zero set, \(\widetilde v\) has exactly the same zeros
as \(h\). In particular, \(\widetilde v\) has no real or spherical zeros, and all its isolated
nonreal zeros lie in the slice \(\CC_{e_1}\), because the same is true for \(h\).

Furthermore,
\[
N(\widetilde v)=N(h)\,N(e_2)=N(h)=N(v).
\]
Since \(\widetilde v\) has no real or spherical zeros, its central divisor is trivial:
\[
\cdiv(\widetilde v)=0=\cdiv(v).
\]

Now set
\[
g:=d\,\widetilde v.
\]
Since \(d\) is slice preserving, \(g\) is again purely vectorial. Also,
\[
\Tr(g)=0=\Tr(u),
\]
and
\[
N(g)=d^2\,N(\widetilde v)=d^2\,N(v)=N(u).
\]

At the level of vector stems, the components of \(g\) are obtained by multiplying those of
\(\widetilde v\) by the common factor \(\delta\). Since \(\cdiv(\widetilde v)=0\), it follows that
\[
\cdiv(g)=\operatorname{div}(\delta)=\cdiv(u).
\]
Therefore \(g\) and \(u\) have the same invariants \(\Tr\), \(N\), and \(\cdiv\), hence they are
strongly equivalent.

Finally, multiplication by the slice-preserving factor \(d\) can only add real or spherical zeros.
Therefore the isolated nonreal zeros of \(g\) are exactly those of \(\widetilde v\), and hence they
all lie in the same slice \(\CC_{e_1}\). Thus \(g\) is a model for \(u\).
\end{proof}

\begin{corollary}\label{cor:vector-part-model}
Let \(f=f_0+f_v\in \Ss(\Omega)\). Then the vector part \(f_v\) admits a model.
\end{corollary}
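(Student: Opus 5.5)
The corollary follows directly from Theorem~\ref{thm:purely-vectorial-model}. The plan is to check that $f_v$ is itself a purely vectorial slice regular function on $\Omega$, and then apply the theorem to $u:=f_v$.

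First, $f_v$ is slice regular on $\Omega$ and purely vectorial. Writing $f=f_0+f_1e_1+\dots+f_ne_n$ with $f_\ell\in\Ss_\RR(\Omega)$, we have
\[
f_v=\frac{f-f^c}{2}=f_1e_1+\dots+f_ne_n .
\]
This is induced by the holomorphic intrinsic stem function $F_v=F_1e_1+\dots+F_ne_n$, so $f_v\in\Ss(\Omega)$. Its scalar component vanishes identically, so it is purely vectorial.

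Next, apply Theorem~\ref{thm:purely-vectorial-model} with $u:=f_v$. This produces $g\in\Ss(\Omega)$ that is strongly equivalent to $f_v$, i.e.\ with $\Tr(g)=0$, $N(g)=N(f_v)$ and $\cdiv(g)=\cdiv(f_v)=\cdiv(f)$, and whose isolated nonreal zeros all lie in $\CC_{e_1}$. By Definition~\ref{def:model}, $g$ is a model for $f_v$.

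There is essentially no obstacle. The only point worth checking is that the theorem is stated for an arbitrary purely vectorial function on the same domain $\Omega$, with no further hypotheses. In particular it needs no nondegeneracy assumption on $f$: if $f_v\equiv 0$, then $f_v$ is trivially its own model, which is also the first case of the theorem's proof. If the ambient standing assumption that $\Omega$ is a basic domain is needed, as it is for Lemma~\ref{lem:symmetrization-two-squares}, it is inherited verbatim from the hypotheses on $f$.
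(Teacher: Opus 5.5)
Your proposal is correct and is exactly the paper's argument: the corollary follows by applying Theorem~\ref{thm:purely-vectorial-model} directly to the purely vectorial function $f_v$. Your additional checks are accurate and only make explicit what the paper leaves implicit; these are that $f_v=\frac{f-f^c}{2}$ is slice regular and purely vectorial, that $\cdiv(f_v)=\cdiv(f)$, and that the case $f_v\equiv 0$ is trivial.
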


\begin{proof}
Apply Theorem~\ref{thm:purely-vectorial-model} to the purely vectorial function \(f_v\).
\end{proof}

\begin{example}\label{ex:octonionic-model-nonzero-trace}
Let
\(D:=\{z\in\CC:\ |z|<1\}\), and \(\Omega:=\Omega_D^\OO\).
Consider the octonionic slice regular polynomial
\[
f(x):=x^2+x^4e_1+2x^3e_2+\sqrt6\,x^2e_3+2xe_4+e_5.
\]
Its scalar part is \(f_0(x)=x^2\), so
\[
\Tr(f)=2x^2\not\equiv 0.
\]
Moreover, its vector part is
\[
f_v(x)=x^4e_1+2x^3e_2+\sqrt6\,x^2e_3+2xe_4+e_5,
\]
hence
\[
N(f_v)=x^8+4x^6+6x^4+4x^2+1=(x^2+1)^4.
\]
Since \(z^2+1\) has no zeros in \(D\), the slice-preserving function
\[
h(x):=(x^2+1)^2
\]
is zero-free on \(\Omega\). Define
\[
g(x):=x^2+h(x)e_1=x^2+(x^2+1)^2e_1.
\]
Then \(g\) is \(\CC_{e_1}\)-preserving.

We now compare the invariants of \(f\) and \(g\). First,
\[
\Tr(g)=2x^2=\Tr(f).
\]
Next,
\[
N(g)=x^4+h(x)^2=x^4+(x^2+1)^4=N(f).
\]
Finally, since the vector components of \(f\) are
\((x^4, 2x^3, \sqrt6\,x^2, 2x, 1)\),
their common divisor on \(D\) is trivial, so
\[
\cdiv(f)=0.
\]
On the other hand, the vector part of \(g\) is \(h(x)e_1\), and \(h\) is zero-free on \(D\), hence
\[
\cdiv(g)=0.
\]
Therefore \(f\) and \(g\) are strongly equivalent. Since \(g\) is \(\CC_{e_1}\)-preserving,
it is a model for \(f\).

At the level of stem functions, if
\[
F(z):=z^2+z^4e_1+2z^3e_2+\sqrt6\,z^2e_3+2ze_4+e_5,
\qquad
G(z):=z^2+(z^2+1)^2e_1,
\]
then Theorem~\ref{thm:strong-equivalence-sections} yields an intrinsic holomorphic map
\[
\phi:D\to\Aut(\OO_\CC)
\]
such that
\[
G(z)=\phi(z)\bigl(F(z)\bigr)
\qquad \forall z\in D.
\]
Notice that \(\phi\) cannot be chosen constant, since the vector part of \(F(z)\) is not contained
in a fixed complex line of \(\Im(\OO_\CC)\): for instance,
\[
F_v(0)=e_5,
\qquad
F_v(1/2)=\tfrac1{16}e_1+\tfrac14 e_2+\tfrac{\sqrt6}{4}e_3+e_4+e_5
\]
are not proportional. For an explicit computation of \(\phi\) see Appendix~\ref{G2}.
\end{example}

\begin{example}\label{ex:octonionic-two-components}
Consider the purely vectorial octonionic slice regular polynomial
\[
f(x):=x^4e_1+2x^3e_2+2xe_3+e_4.
\]
Then
\(\Tr(f)\equiv 0\) and
\(N(f)=x^8+4x^6+4x^2+1\).
A direct computation shows that
\[
x^8+4x^6+4x^2+1
=
\bigl(x^4+(\sqrt{10}-2)x^2+1\bigr)^2
+
(8-2\sqrt{10})(x^3-x)^2.
\]
Set
\[
h_1(x):=x^4+(\sqrt{10}-2)x^2+1,
\qquad
h_2(x):=\sqrt{\,8-2\sqrt{10}\,}\,(x^3-x),
\]
and define
\[
g(x):=h_1(x)e_1+h_2(x)e_2.
\]
Then \(g\) is purely vectorial and has only two nonzero slice-preserving components. Moreover,
\[
\Tr(g)=0=\Tr(f),
\qquad
N(g)=h_1^2+h_2^2=N(f).
\]

We now compare the central divisors.
Since the components of \(f\) are
\((x^4, 2x^3, 2x, 1)\),
they have no common zero, hence
\[
\cdiv(f)=0.
\]
On the other hand, the components of \(g\) are \(h_1\) and \(h_2\), and these have no common zero:
indeed \(h_2\) vanishes only at \(0,\pm1\), while
\[
h_1(0)=1,\qquad h_1(1)=\sqrt{10},\qquad h_1(-1)=\sqrt{10}.
\]
Therefore
\[
\cdiv(g)=0.
\]
Hence \(f\) and \(g\) are strongly equivalent.

In particular, this gives an explicit reduction of \(f\) to a strongly equivalent purely vectorial
slice regular function with only two components.
\end{example}

\begin{remark}
In both examples the restriction to a suitable basic domain is essential.
Indeed, on the whole complex plane the corresponding \(\CC_{e_1}\)-preserving representatives
would have nontrivial central divisor, whereas the original functions have \(\cdiv=0\).
Thus the strong equivalence with a one-slice-preserving model is a genuinely local phenomenon.
\end{remark}

We now show that a model does not always exist inside the strong-equivalence class.
\begin{example}\label{ex:no-CI-preserving-representative}
Consider the quaternionic slice regular polynomial
\[
f(x):=(x-i)*(x-2j)*(x-3k).
\]
Since we are in the quaternionic case, the $*$-product is associative, and a direct expansion gives
\[
f(x)
=
x^3-x^2(i+2j+3k)+x(6i-3j+2k)+6.
\]
Hence
\[
f_0(x)=x^3+6,
\qquad
\Tr(f)=2x^3+12\not\equiv 0,
\]
and
\[
f_v(x)=x(6-x)i-x(2x+3)j+x(2-3x)k.
\]

By repeated application of the camshaft effect~\cite{Ghiloni2010}, the zero set of \(f\) is
\[
\Zz(f)
=
\left\{
i,\,
\frac85\,i+\frac65\,j,\,
\frac95\,i+\frac{144}{65}\,j+\frac{12}{13}\,k
\right\}.
\]
While the quaternion $i$ is easily computed as zero of $f$, the explicit computation of the other two zeros is done in the following Appendix~\ref{appendix:computation}.
In particular, \(f\) has three isolated nonreal zeros, lying on the spheres of radii
\(1,2,3\), respectively.

We now show that \(f\) is \emph{not} strongly equivalent to any \(\CC_I\)-preserving function.

Indeed, from the expression of the vector part we immediately get
\[
\cdiv(f)=[0],
\]
because the three slice-preserving components of \(f_v\) have the common factor \(x\), and no larger common divisor.

Moreover,
\begin{align*}
N(f_v)
&=
x^2(6-x)^2+x^2(2x+3)^2+x^2(2-3x)^2\\
&=
x^2\bigl((6-x)^2+(2x+3)^2+(2-3x)^2\bigr)\\
&=
x^2(14x^2-12x+49).
\end{align*}
Equivalently,
\[
N(f_v)=14\,x^2\,\Delta_q(x),
\qquad
q=\frac37+\frac{5\sqrt{26}}{14}\,I,
\]
for any \(I\in\mathbb S_{\HH}\). Thus \(N(f_v)\) has a real zero of multiplicity \(2\) at \(0\),
and one spherical zero of multiplicity \(2\).

Assume by contradiction that \(f\) is strongly equivalent to a \(\CC_I\)-preserving function.
Then, by Theorem~\ref{thm:criterion-one-slice-model}, there should exist
\(h\in \Ss_\RR(\Omega)\), \(h\not\equiv 0\), such that
\[
h^2=N(f_v)
\qquad\text{and}\qquad
\operatorname{div}(h)=\cdiv(f).
\]
However, this is impossible. Indeed, by the square-root criterion for slice-preserving
functions on simply connected circular domains, a slice-preserving function admits a square root
only if its real zeros have even isolated multiplicity and its spherical zeros have multiplicity
multiple of \(4\); see Proposition~3.1 in \cite{AltavillaPAMS}. Since \(N(f_v)\)
has a spherical zero of multiplicity \(2\), it cannot admit any slice-preserving square root.

Therefore \(f\) is not strongly equivalent to any \(\CC_I\)-preserving function.
In particular, the criterion of Theorem~\ref{thm:criterion-one-slice-model} does not apply to \(f\).
\end{example}

\begin{remark}\label{rem:vector-part-still-has-model}
The previous example shows that, even when \(\Tr(f)\not\equiv 0\), one cannot in general expect
\(f\) to be strongly equivalent to a \(\CC_I\)-preserving function.

On the other hand, this does not contradict Theorem~\ref{thm:purely-vectorial-model}.
Indeed, the obstruction only concerns the \emph{full} function \(f=f_0+f_v\). The purely vectorial part
\[
f_v(x)=x(6-x)i-x(2x+3)j+x(2-3x)k
\]
still admits a model by Theorem~\ref{thm:purely-vectorial-model}. In other words, even though
one cannot reduce \(f\) itself to a one-slice-preserving representative, one can still align the
isolated nonreal zeros of its vector part.
\end{remark}

A uniqueness statement for arbitrary models seems delicate. However, if the
models are themselves one-slice-preserving, then uniqueness holds up to the natural
action of constant automorphisms of the algebra.

\begin{proposition}\label{thm:uniqueness-model}
Let \(\Omega=\Omega_D^A\) be a basic domain and let \(f\in \Ss(\Omega)\).
Assume that \(g_1,g_2\in \Ss(\Omega)\) are two models for \(f\), and that
\(g_\ell\) is \(\CC_{I_\ell}\)-preserving and not slice preserving, for
\(\ell=1,2\).

Then there exist \(g_0,h\in \Ss_\RR(\Omega)\), with \(h\not\equiv 0\), and
\(\varepsilon\in\{\pm1\}\) such that
\[
g_1=g_0+hI_1,
\qquad
g_2=g_0+\varepsilon h I_2.
\]
In particular, there exists a constant automorphism \(\psi\in\Aut(A)\) such that
\[
\psi(I_1)=\varepsilon I_2
\qquad\text{and}\qquad
g_2=\psi\cdot g_1.
\]
Thus one-slice-preserving models are unique up to the natural action of
\(\Aut(A)\).
\end{proposition}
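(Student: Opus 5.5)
Since \(g_1\) and \(g_2\) are models for \(f\), both are strongly equivalent to \(f\), and hence to each other. Strong equivalence is transitive: it is defined by equality of the three invariants \((\Tr,N,\cdiv)\). So \(g_1\) and \(g_2\) share \(\Tr\), \(N\) and \(\cdiv\).

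First I write \(g_\ell=a_\ell+h_\ell I_\ell\) with \(a_\ell,h_\ell\in\Ss_\RR(\Omega)\) and \(h_\ell\not\equiv 0\). Equality of traces gives \(a_1=a_2=:g_0\). Equality of symmetrizations gives \(g_0^2+h_1^2=g_0^2+h_2^2\), hence \(h_1^2=h_2^2\), that is \((h_1-h_2)(h_2+h_1)=0\) in \(\Ss_\RR(\Omega)\).

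The key step is to deduce from this that \(h_2=\varepsilon h_1\) for a constant sign \(\varepsilon\). Passing to the intrinsic holomorphic stems \(H_1,H_2:D\to\CC\), we get \((H_1-H_2)(H_1+H_2)\equiv 0\) on \(D\). Since \(D\) is connected (\(\Omega\) is a domain and \(D\) is simply connected), the ring of holomorphic functions on \(D\) is an integral domain, so one factor vanishes identically. If \(\Omega\cap\RR=\emptyset\), then \(D\) splits as \(D^+\cup D^-\) with \(D^-=\overline{D^+}\). In that case I apply the identity principle on \(D^+\) and transport the conclusion to \(D^-\) using intrinsicity \(H_\ell(\bar z)=\overline{H_\ell(z)}\). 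The sign obtained on \(D^+\) then automatically holds on \(D^-\), because \(\overline{\pm H_1(z)}=\pm H_1(\bar z)\). This gives \(h_2=\varepsilon h_1\). Setting \(h:=h_1\), we obtain \(g_1=g_0+hI_1\) and \(g_2=g_0+\varepsilon hI_2\). I note that the central divisor is not even needed here; it is automatically equal, since \(\operatorname{div}(h)=\operatorname{div}(-h)\).

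For the last claim, I need \(\psi\in\Aut(A)\) with \(\psi(I_1)=\varepsilon I_2\). Since \(\varepsilon I_2\in\mathbb S_A\), this follows from the transitivity of \(\Aut(A)\) on \(\mathbb S_A\). For \(\HH\), the group \(\Aut(\HH)\cong SO(3)\) acts transitively on \(S^2\) by inner conjugations. For \(\OO\), the group \(G_2\) acts transitively on \(S^6\). I can justify the latter directly: complete \(I_1\) and \(\varepsilon I_2\) to Cayley–Dickson standard triples \((I,J,IJ\cdot\ \text{etc.})\). Any two such triples (orthonormal \(u,v\) with \(w\perp\{u,v,uv\}\)) determine a unique automorphism carrying one to the other. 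I extend \(\psi\) \(\CC\)-linearly to \(A_\CC\). It is a constant intrinsic map \(D\to G_\CC\), since it is real, and it fixes \(\CC\subset A_\CC\) pointwise. Hence it acts on stems as \(\psi\cdot(G_0+H I_1)=G_0+H\psi(I_1)=G_0+\varepsilon HI_2\), which is the stem of \(g_2\). Therefore \(g_2=\psi\cdot g_1\) by Proposition~\ref{prop:intrinsic-action}.

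The only genuinely delicate point is the sign step when \(D\) is disconnected. I plan to handle it as described, using the intrinsic symmetry rather than treating the two components independently.
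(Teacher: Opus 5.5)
Your proof is correct and has the same skeleton as the paper's. Equal traces give a common scalar part $g_0$, equal symmetrizations give $h_1^2=h_2^2$, a connectedness argument fixes a constant sign $\varepsilon$, and a constant automorphism with $\psi(I_1)=\varepsilon I_2$ finishes.

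The only variation is in the sign step. The paper uses $\operatorname{div}(h_1)=\cdiv(g_1)=\cdiv(g_2)=\operatorname{div}(h_2)$ to extend $H_2/H_1$ to a zero-free function with square $1$. Your integral-domain argument shows this hypothesis is redundant, so weak equivalence already suffices for one-slice-preserving functions. It also explicitly handles the case $\Omega\cap\RR=\emptyset$ through intrinsic symmetry, which the paper's phrase ``since $D$ is connected'' passes over.
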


\begin{proof}
Since \(g_1\) and \(g_2\) are both models for \(f\), they are strongly equivalent to
\(f\), hence strongly equivalent to each other.

Because \(g_\ell\) is \(\CC_{I_\ell}\)-preserving and not slice preserving, one can write
\[
g_\ell=a_\ell+h_\ell I_\ell,
\qquad
a_\ell,h_\ell\in \Ss_\RR(\Omega),
\quad h_\ell\not\equiv 0.
\]
Equality of the traces gives
\[
2a_1=\Tr(g_1)=\Tr(g_2)=2a_2,
\]
hence \(a_1=a_2=:g_0\).

Equality of the norms yields
\(g_0^2+h_1^2=N(g_1)=N(g_2)=g_0^2+h_2^2\),
so that
\[
h_1^2=h_2^2.
\]
Moreover, by Remark~\ref{rem:div-of-slicepres},
\[
\operatorname{div}(h_1)=\cdiv(g_1)=\cdiv(g_2)=\operatorname{div}(h_2).
\]
Therefore the quotient of the inducing intrinsic holomorphic stems of \(h_2\) and
\(h_1\) extends to a nowhere vanishing intrinsic holomorphic function whose square is
identically \(1\). Since \(D\) is connected, this quotient must be constant equal to
\(\varepsilon\in\{\pm1\}\). Hence
\[
h_2=\varepsilon h_1.
\]
Setting \(h:=h_1\), we obtain
\[
g_1=g_0+hI_1,
\qquad
g_2=g_0+\varepsilon h I_2.
\]

Finally, choose \(\psi\in\Aut(A)\) such that \(\psi(I_1)=\varepsilon I_2\). Since
\(\psi\) fixes the real line pointwise, it fixes every slice-preserving function, and
therefore
\[
\psi\cdot g_1
=
\psi(g_0+hI_1)
=
g_0+h\,\psi(I_1)
=
g_0+\varepsilon h I_2
=
g_2.
\]
\end{proof}

\begin{remark}
The previous proposition shows uniqueness only for models which are themselves
one-slice-preserving. For a general model, the condition that all isolated nonreal zeros
lie in a single slice does not seem to force the whole function to be
\(\CC_I\)-preserving, so a corresponding uniqueness statement is less clear.
\end{remark}

\subsection{Semi-models}

We now introduce the more flexible notion which always arises from the general alignment procedure.

\begin{definition}\label{def:semimodel}
Let $f\in \Ss(\Omega)$ with $N(f)\not\equiv 0$.
A \emph{semi-model} for $f$ is a slice regular function $\widehat f\in \Ss(\Omega)$ such that:
\begin{enumerate}
\item $N(\widehat f)=N(f)$;
\item the spherical zeros of $\widehat f$ coincide with those of $f$, with the same spherical multiplicities;
\item all isolated nonreal zeros of $\widehat f$ lie in a single slice $\CC_I$, for some $I\in\mathbb S_A$.
\end{enumerate}
\end{definition}

\begin{remark}
Since $N(\widehat f)=N(f)$, the real zeros of $\widehat f$ and $f$ automatically coincide,
with the same multiplicities. The additional condition on spherical zeros is needed because
the symmetrization alone does not distinguish spherical zeros from isolated nonreal zeros
lying on the same sphere.
\end{remark}

\begin{remark}
Unlike a model, a semi-model need not be strongly equivalent to the original function.
It does, however, preserve the relevant zero data.
\end{remark}

First, we show that every slice regular function is strongly equivalent, both in the quaternionic and in the octonionic setting, to a function whose stem function takes values in \(\Span\{1,i,j\}\).

\begin{proposition}\label{prop:two-directions}
Let $\Omega=\Omega_D^A$ be a basic domain and let $f=f_0+f_v=\Ii(F)\in \Ss(\Omega)$.
Then there exist:
\begin{itemize}
\item a fixed quaternionic subalgebra
\[
\HH_0=\Span_{\RR}\{1,i,j,k\}\subset A,
\qquad k:=ij,
\]
\item slice-preserving functions $h_1,h_2\in \Ss_\RR(\Omega)$,
\end{itemize}
such that the slice regular function
\[
f':=f_0+h_1 i+h_2 j
\]
is strongly equivalent to $f$.
\end{proposition}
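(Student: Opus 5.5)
Using Theorem~\ref{thm:strong-equivalence-sections}, it is enough to find $h_1,h_2\in\Ss_\RR(\Omega)$ such that $f'=f_0+h_1i+h_2j$ has the same invariants $\Tr$, $N$ and $\cdiv$ as $f$. If $f$ is slice preserving we take $h_1=h_2=0$. Otherwise $f'$ will also fail to be slice preserving, and the equality of invariants gives strong equivalence. Since $\Tr(f')=2f_0=\Tr(f)$ holds automatically, the problem reduces to two conditions on the vector part:
\[
h_1^2+h_2^2=N(f_v),\qquad \operatorname{div}(H_1,H_2)=\cdiv(f),
\]
where $H_1,H_2$ are the intrinsic stems of $h_1,h_2$. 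Note that $N(f)=f_0^2+N(f_v)$.

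We follow the proof of Theorem~\ref{thm:purely-vectorial-model}. First factor out the central divisor: choose an intrinsic holomorphic $\delta$ on $D$ with $\operatorname{div}(\delta)=\cdiv(f)$. Weierstrass works on the simply connected, conjugation-symmetric base, and we take the intrinsic version by pairing conjugate points. Then $F_v=\delta V$ with $\operatorname{div}(V)=0$, and $f_v=d\,v$ with $d=\Ii(\delta)$ and $\cdiv(v)=0$. Since $v$ is purely vectorial with trivial central divisor, it has no real or spherical zeros. Lemma~\ref{lem:symmetrization-two-squares}, applied to $v$ with the unit $i$, then gives $k=k_1+k_2 i\in\Ss_i(\Omega)$ with $N(k)=N(v)$ and with no real or spherical zeros.

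We then set $h_1:=d\,k_1$ and $h_2:=d\,k_2$. This gives
\[
h_1^2+h_2^2=d^2(k_1^2+k_2^2)=d^2N(v)=N(f_v).
\]
For the divisor, note that $k$ has no real or spherical zeros, so by Remark~\ref{rem_sph_real_zeroes} its components $k_1,k_2$ have no common real zero and no common conjugate pair of zeros on the base. Hence $\operatorname{div}(K_1,K_2)=0$ and $\operatorname{div}(H_1,H_2)=\operatorname{div}(\delta)=\cdiv(f)$. The vector part $h_1i+h_2j$ has exactly these two components, so $\cdiv(f')=\cdiv(f)$, which completes the argument. Equivalently, $f'=f_0+d\,k\,i$ up to relabeling, since $(k_1+k_2 i)$ times a unit gives components along two orthogonal directions. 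To land exactly on $\Span\{i,j\}$ we can instead use $k\,j$-type products as in the model theorem, or simply define $f'$ directly from $k_1,k_2$ as above.

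The step needing the most care is the use of Lemma~\ref{lem:symmetrization-two-squares}, in particular the claim that "no real or spherical zeros of $k$" translates into "$K_1,K_2$ have no common zero on $D$". At a real point a common zero of $K_1,K_2$ is a real zero of $k$. At a nonreal $z$ with $K_1(z)=K_2(z)=0$, intrinsicness gives $K_\ell(\bar z)=0$ as well, so $\Delta$ divides both components and $k$ has a spherical zero. Both cases are excluded. We also check the degenerate case $N(v)$ zero-free, where the lemma gives $k=\sigma$ with $\sigma^2=N(v)$; then $k_2=0$ and $\operatorname{div}(K_1)=0$ is still fine. Finally, $k\not\equiv0$ because $N(v)\not\equiv 0$: a purely vectorial $v\not\equiv0$ with $\cdiv(v)=0$ could have $N(v)\equiv0$ only if $\Omega\cap\RR=\emptyset$. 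That case would need separate handling, but the paper's standing assumption $N(f)\not\equiv0$ does not directly exclude $N(f_v)\equiv 0$, so this is a case we note rather than resolve here.
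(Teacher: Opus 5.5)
Your proof is correct and is essentially the paper's argument. You factor $f_v=d\,v$ with $\operatorname{div}(d)=\cdiv(f)$ and $\cdiv(v)=0$, apply Lemma~\ref{lem:symmetrization-two-squares} to $N(v)$ to get two slice-preserving components with no common zero, multiply back by $d$, and compare $\Tr$, $N$, $\cdiv$. Your check that ``no real or spherical zeros of $k_1+k_2 i$'' means ``$K_1,K_2$ have no common zero on $D$'' is exactly the step the paper leaves implicit when it asserts $\cdiv(u)=0$.

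Two small points remain.

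\emph{The case you leave open.} It does not arise if, as in Definition~\ref{def:basic-domain}, $D$ is simply connected. Then $D$ is connected and $D=\bar D$, so a path from $z$ to $\bar z$ meets $\RR$. Hence $N(f_v)\equiv 0$ forces $f_v\equiv 0$ by the identity principle applied on $D\cap\RR$. If one does admit $\Omega\cap\RR=\emptyset$, the case closes directly. Put $H_1:=\ii\,\delta$ and $H_2:=\delta$ on $D\cap\CC^+$ and extend intrinsically. Then $H_1^2+H_2^2\equiv 0=N(f_v)$ and $\operatorname{div}(H_1,H_2)=\operatorname{div}(\delta)=\cdiv(f)$. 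The paper's proof is silent on this case too; the Lemma's trivial choice $h\equiv 0$ would fail there.

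\emph{The aside $f'=f_0+d\,k\,i$.} Delete it. Since $k\,i=-k_2+k_1 i$, it changes the scalar part and hence the trace. Your direct definition $f'=f_0+dk_1\,i+dk_2\,j$ is the right one. Also rename your $k$, which clashes with $k=ij$ in the statement.
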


In particular, if \(F\) and \(F'\) denote the stem function of \(f\) and \(f'\), respectively, then by Theorem~\ref{thm:strong-equivalence-sections}, there exists an intrinsic
holomorphic map
\(\phi:D\to \Aut(A_\CC)\)
such that
\[
F'(z)=\phi(z)\bigl(F(z)\bigr)\subset \CC\oplus \CC i\oplus \CC j\subset (\HH_0)_\CC\subset A_\CC
\qquad \forall z\in D.
\]

\begin{proof}[proof of Proposition~\ref{prop:two-directions}]
Write
\[
f=f_0+f_v,
\qquad
f_v=\Ii(F_v).
\]
If $f_v\equiv 0$, then $f$ is slice preserving and the conclusion is trivial by taking
\(f'=f\) and \(h_1=h_2\equiv 0\).

Assume now that $f_v\not\equiv 0$.
As in the proof of Theorem~\ref{thm:purely-vectorial-model}, we factor out the
central divisor of the vector part. More precisely, there exists
\[
d\in \Ss_\RR(\Omega)
\]
such that
\[
\operatorname{div}(d)=\cdiv(f),
\]
and there exists a purely vectorial function
\(v\in \Ss(\Omega)\) with
\[
f_v=d\,v
\qquad\text{and}\qquad
\cdiv(v)=0.
\]
Therefore, $v$ has no real or spherical zeros.

We now apply to $v$ the same reduction argument used for purely vectorial functions.
By Lemma~\ref{lem:symmetrization-two-squares}, the slice-preserving function $N(v)$ can
be written as a sum of two squares. Choosing this decomposition as in the construction
behind Theorem~\ref{thm:purely-vectorial-model}, we obtain slice-preserving
functions
\[
a,b\in \Ss_\RR(\Omega)
\]
such that the purely vectorial function
\[
u:=a\,i+b\,j
\]
satisfies
\[
N(u)=N(v)
\qquad\text{and}\qquad
\cdiv(u)=0.
\]
In particular, $u$ takes values in the fixed quaternionic subalgebra
\[
\HH_0=\Span_{\RR}\{1,i,j,k\}\subset A.
\]

Now define
\[
f':=f_0+d\,u.
\]
Then
\[
f'=f_0+h_1 i+h_2 j,
\qquad
h_1:=da,\quad h_2:=db,
\]
so $f'$ has the required form.

We compare the invariants of $f$ and $f'$.
First,
\[
\Tr(f')=2f_0=\Tr(f).
\]
Next,
\[
N(f')
=
f_0^2+d^2N(u)
=
f_0^2+d^2N(v)
=
f_0^2+N(f_v)
=
N(f).
\]
Finally, since $d$ is slice preserving and $\cdiv(u)=0$, we get
\[
\cdiv(f')
=
\operatorname{div}(d)+\cdiv(u)
=
\cdiv(f)+0
=
\cdiv(f).
\]
Hence $f$ and $f'$ have the same three invariants $\Tr$, $N$ and $\cdiv$; by
Definition~\ref{def:weak-strong}, they are strongly equivalent.
\end{proof}

The equivalent formulation in terms of the stem is immediate: if $F'$ is the stem
function of $f'$, then from
\[
f'=f_0+h_1 i+h_2 j
\]
it follows that
\(F'(D)\subset \CC\oplus \CC i\oplus \CC j\subset (\HH_0)_\CC\).

Finally we prove our main theorem.
\begin{theorem}[Existence of semi-models]
\label{thm:semimodel-existence}
Let $\Omega=\Omega_D^A$ be a basic domain and let $f\in \Ss(\Omega)$ with $N(f)\not\equiv 0$.
Then $f$ admits a semi-model $\widehat f$.
\end{theorem}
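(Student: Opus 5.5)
The plan is to separate the \emph{full} real and spherical zero data of $f$ into a slice-preserving factor, and then replace the zero-free-on-spheres remainder by a one-slice-preserving function with the same symmetrization. This uses Lemma~\ref{lem:symmetrization-two-squares} and Remark~\ref{rem_sph_real_zeroes}. The two-direction reduction of Proposition~\ref{prop:two-directions} is not strictly needed for this route. It could be used first to reduce to $f'=f_0+h_1i+h_2j$, which has the same $N$ and the same real and spherical zero data as $f$, but I would work directly with all components.

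First I would write $f=f_0+f_1e_1+\dots+f_ne_n$ with intrinsic holomorphic stems $F_0,\dots,F_n$ on the simply connected base $D$. I would take the divisor of the curve $(F_0,\dots,F_n)$, i.e.\ the common zero divisor of \emph{all} components, scalar one included. It is supported on real points and on conjugate pairs $z,\bar z$, with equal orders on each pair. By Weierstrass factorization on $D$, choosing the factors $\Delta_{z}$ and $(z-x_0)$ so that the result is intrinsic, there is an intrinsic holomorphic $\Pi$ with exactly this divisor. Then $F_\ell=\Pi G_\ell$ with $(G_0,\dots,G_n)$ intrinsic holomorphic and without common zeros. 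Setting $p:=\Ii(\Pi)\in\Ss_\RR(\Omega)$ and $g:=\sum_\ell \Ii(G_\ell)e_\ell$, we get $f=p\,g$. Here $g\not\equiv 0$ and $N(g)\not\equiv0$ because $N(f)=p^2N(g)$. By Remark~\ref{rem_sph_real_zeroes}, $g$ has no real or spherical zeros. The real and spherical zeros of $f$, with their multiplicities, are exactly those of the slice-preserving factor $p$, because $f=\Delta_q^m*(\cdots)$ with $m$ maximal corresponds to $\Delta_q^m$ exactly dividing $\Pi$.

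Next I would fix $I\in\mathbb S_A$ and apply the second part of Lemma~\ref{lem:symmetrization-two-squares} to $g$. This gives $h=h_1+h_2I\in\Ss_I(\Omega)$ with $N(h)=N(g)$, having no real or spherical zeros, and with all zeros isolated and lying in $\CC_I$. I would then define
\[
\widehat f:=p\,h=p\,h_1+p\,h_2 I .
\]
The verification is:
\begin{itemize}
\item[(i)] $N(\widehat f)=p^2N(h)=p^2N(g)=N(f)$.
\item[(ii)] The common divisor of the components $(\Pi H_1,\Pi H_2)$ equals $\operatorname{div}(\Pi)$, since $H_1,H_2$ have no common zero. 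This holds because $h$ has no real or spherical zeros, again by Remark~\ref{rem_sph_real_zeroes}. So the real and spherical zeros of $\widehat f$ coincide with those of $f$, with the same multiplicities.
\item[(iii)] $\widehat f$ is $\CC_I$-preserving, so all its isolated nonreal zeros lie in $\CC_I$.
\end{itemize}

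The main obstacle is the multiplicity bookkeeping in (ii). Definition~\ref{def:multiplicities} defines the spherical multiplicity through the factorization of Theorem~\ref{thm:decomposition}. I must check that the maximal power $\Delta_q^m$ dividing $f$ is exactly the power of $\Delta_q$ in $\Pi$, including when the cofactor $g$ (or $h$) still has an isolated zero on the same sphere $\mathbb S_q$. The argument is that $\Delta_q^{m+1}$ divides $f$ if and only if it divides every component stem. Such an isolated zero of the cofactor contributes to the isolated data on $\mathbb S_q$, not to $m$. A secondary point to check is that the Weierstrass product $\Pi$ can be chosen intrinsic; this follows by pairing $z$ with $\bar z$ and using real convergence factors.
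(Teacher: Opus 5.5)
Your proof is correct, and it takes a shorter route than the paper's.

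\textbf{The paper's route.} It first applies Proposition~\ref{prop:two-directions} to replace $f$ by a strongly equivalent $f'=f_0+h_1i+h_2j$. It then passes to $v=f'*k$, which is purely vectorial precisely because $f'$ has no $k$-component. The point of this device is that the common divisor of \emph{all} components of $f'$, scalar one included, becomes the central divisor $\cdiv(v)$. The paper then writes $v=d\,w$ with $\cdiv(w)=0$, applies Theorem~\ref{thm:purely-vectorial-model} to $w$, and sets $\widehat f=d\,\widetilde w$.

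\textbf{Your route.} You factor out the full common divisor $\Pi$ of $(F_0,\dots,F_n)$ directly and apply the second part of Lemma~\ref{lem:symmetrization-two-squares} to the cofactor $g$.

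\textbf{Why the two end up the same.} Since $f'$ shares $f_0$ and $\cdiv$ with $f$, $\operatorname{div}(d)=\cdiv(f'*k)$ is exactly $\operatorname{div}(\Pi)$. Moreover, the model produced by Theorem~\ref{thm:purely-vectorial-model} for $w$ (where $\cdiv(w)=0$) has the form $h\,e_2$, up to a zero-free slice-preserving factor. Here $h\in\Ss_{e_1}(\Omega)$ comes from the same lemma and has no real or spherical zeros. So the paper's $\widehat f$ is essentially one of your functions $p\,h$ (with $I=e_1$) right-multiplied by the constant $e_2$, which does not change the zero set.

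\textbf{What each version buys.} Yours is more economical: it needs neither the two-direction reduction, nor the $*k$ trick, nor the purely vectorial model theorem. It also gives a slightly sharper normal form: your semi-model is itself $\CC_I$-preserving. The paper's semi-model is purely vectorial and in general not one-slice-preserving; only its isolated zeros are aligned. The paper's version instead routes the construction through the strong-equivalence class of $f$. This makes visible how the scalar part is absorbed into a central divisor and how $(\Tr,N,\cdiv)$ enter, which fits the paper's narrative but is not logically needed.

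\textbf{Multiplicities.} Your bookkeeping in (ii), that the power of $\Delta_q$ dividing $f$ equals its power in $\Pi$, is the right argument. It also yields, for free, the extra claim recorded in the paper's proof. Since $N(\widehat f)=N(f)$ and the spherical multiplicities agree, the isolated nonreal zeros of $\widehat f$ lie on the same spheres as those of $f$, with the same isolated multiplicities.
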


\begin{proof}
By Proposition~\ref{prop:two-directions}, there exists a function
\[
f'=f_0'+h_1 i+h_2 j
\]
which is strongly equivalent to $f$.
In particular,
\[
\Tr(f')=\Tr(f),\qquad N(f')=N(f),\qquad \cdiv(f')=\cdiv(f).
\]
Hence $f$ and $f'$ have the same real and spherical zeros, with the same multiplicities.

Set
\[
v:=f'*k.
\]
Since $k$ is a nonzero constant, $v$ is slice regular and
\[
\Zz(v)=\Zz(f').
\]
Moreover, using $ik=-j$ and $jk=i$, we get
\[
v=f_0'k-h_1j+h_2i,
\]
so $v$ is purely vectorial and takes values in the fixed quaternionic subalgebra $\HH_0$.
Also,
\[
N(v)=N(f')=N(f).
\]
Thus $v$ has the same real and spherical zeros as $f$.

Now factor out the central divisor of $v$. More precisely, choose
\[
d\in \Ss_\RR(\Omega)
\]
such that
\[
\operatorname{div}(d)=\cdiv(v),
\]
and write
\[
v=d\,w,
\]
where $w$ is purely vectorial and satisfies
\[
\cdiv(w)=0.
\]
Hence $w$ has no real or spherical zeros.

Applying Theorem~\ref{thm:purely-vectorial-model} to $w$, we obtain a purely vectorial
slice regular function
\[
\widetilde w\in \Ss(\Omega)
\]
such that
\[
N(\widetilde w)=N(w),
\]
all isolated nonreal zeros of $\widetilde w$ lie in a single slice \(\CC_I\), and the moduli
and isolated multiplicities of such zeros are preserved.

Define
\[
\widehat f:=d\,\widetilde w.
\]
Since $d$ is slice preserving, we have
\[
N(\widehat f)=d^2N(\widetilde w)=d^2N(w)=N(v)=N(f).
\]
Moreover, multiplication by the slice-preserving factor $d$ only produces real or spherical zeros.
Therefore the isolated nonreal zeros of $\widehat f$ are exactly those of $\widetilde w$,
and hence they all lie in the same slice \(\CC_I\), with the same moduli and isolated multiplicities
as those of $w$, and therefore as those of $v$ and of $f$.

Finally, the real and spherical zeros of \(\widehat f\) are exactly those encoded by the factor \(d\),
hence they coincide with those of \(v\), and therefore with those of \(f\), with the same multiplicities.
This proves that \(\widehat f\) is a semi-model for \(f\).
\end{proof}

\begin{example}\label{ex:quaternionic-semimodel-explicit}
We reconsider the polynomial of Example~\ref{ex:no-CI-preserving-representative},
namely
\[
f(x):=(x-i)*(x-2j)*(x-3k)
=
x^3-x^2(i+2j+3k)+x(6i-3j+2k)+6.
\]
As already observed there, \(f\) is not strongly equivalent to any
\(\CC_I\)-preserving function.

We now make Proposition~\ref{prop:two-directions} explicit on the whole domain
\(\Omega=\HH\).

The scalar part of \(f\) is
\[
f_0(x)=x^3+6,
\]
while the vector part is
\[
f_v(x)=x(6-x)i-x(2x+3)j+x(2-3x)k.
\]
Hence
\begin{align*}
N(f_v)
&=
x^2(6-x)^2+x^2(2x+3)^2+x^2(2-3x)^2\\
&=
x^2(14x^2-12x+49).
\end{align*}
Completing the square,
\[
14x^2-12x+49
=
14\left(x-\frac37\right)^2+\frac{325}{7},
\]
so that
\[
N(f_v)
=
\left(\sqrt{14}\,x\left(x-\frac37\right)\right)^2
+
\left(\frac{5\sqrt{91}}7\,x\right)^2.
\]
Therefore we may choose the globally defined slice-preserving polynomials
\[
h_1(x):=\sqrt{14}\,x\left(x-\frac37\right),
\qquad
h_2(x):=\frac{5\sqrt{91}}7\,x.
\]
They satisfy
\[
h_1^2+h_2^2=N(f_v).
\]
Moreover, the common divisor of \(h_1\) and \(h_2\) is exactly \(x\), hence
\[
\operatorname{div}(h_1,h_2)=[0]=\cdiv(f).
\]

Define
\[
f'(x):=f_0(x)+h_1(x)i+h_2(x)j
      =x^3+6+\sqrt{14}\,x\left(x-\frac37\right)i+\frac{5\sqrt{91}}7\,x\,j.
\]
Then \(f'\) takes values in the fixed quaternionic subalgebra
\[
\HH_0=\Span_{\RR}\{1,i,j,k\}\subset \HH.
\]
We now compare the invariants of \(f\) and \(f'\). First,
\[
\Tr(f')=2(x^3+6)=\Tr(f).
\]
Next,
\[
N(f')
=
(x^3+6)^2+h_1^2+h_2^2
=
(x^3+6)^2+N(f_v)
=
N(f).
\]
Finally, since the common divisor of \(h_1\) and \(h_2\) is \(x\), we get
\[
\cdiv(f')=[0]=\cdiv(f).
\]
Therefore \(f\) and \(f'\) are strongly equivalent.

Now set
\[
v:=f'*k.
\]
Using \(ik=-j\) and \(jk=i\), we obtain
\[
v(x)=\frac{5\sqrt{91}}7\,x\,i-\sqrt{14}\,x\left(x-\frac37\right)j+(x^3+6)k.
\]
Thus \(v\) is purely vectorial. Since right multiplication by the nonzero constant
\(k\) preserves the zero set, \(v\) has the same real and spherical zeros as \(f\),
with the same multiplicities, and
\[
N(v)=N(f')=N(f).
\]

By Theorem~\ref{thm:purely-vectorial-model}, the purely vectorial function \(v\)
admits a model \(\widetilde v\). Since \(v\) and \(\widetilde v\) are strongly equivalent,
\(\widetilde v\) has the same real and spherical zeros as \(v\), while all its isolated
nonreal zeros lie in a single slice. Consequently, \(\widetilde v\) is a semi-model for
\(f\).

This example shows concretely that, even when no \(\CC_I\)-preserving representative
exists in the strong-equivalence class, one can still perform the explicit two-direction
reduction globally and then obtain a semi-model by passing to the purely vectorial
function \(v=f'*k\).
\end{example}

\begin{example}\label{ex:genuine-octonionic-semimodel-explicit}
Let \(\Omega=\OO\) and consider the slice regular polynomial
\[
f(x):=x^3+6+x\,e_1+x^2e_2+x^3e_4+x^4e_5.
\]
This is a genuinely octonionic example: \(f\) does not take values in any fixed
quaternionic subalgebra of \(\OO\).

Indeed, assume by contradiction that there exists a quaternionic subalgebra
\(Q\subset \OO\) such that \(f(\OO)\subset Q\). Restricting to real points \(t\in\RR\),
we would have
\[
f(t)=t^3+6+t\,e_1+t^2e_2+t^3e_4+t^4e_5\in Q
\qquad \forall t\in\RR.
\]
Since \(Q\) is a real vector subspace and the polynomials \(1,t,t^2,t^3,t^4\) are linearly
independent over \(\RR\), it would follow that
\[
e_1,e_2,e_4,e_5\in Q,
\]
which is impossible because the imaginary part of a quaternionic subalgebra has real
dimension \(3\).

The scalar part of \(f\) is
\[
f_0(x)=x^3+6,
\]
while the vector part is
\[
f_v(x)=x\,e_1+x^2e_2+x^3e_4+x^4e_5.
\]
Hence
\[
\cdiv(f)=[0],
\]
because the four slice-preserving components have common divisor exactly \(x\), and
\[
N(f_v)=x^2+x^4+x^6+x^8.
\]

Now observe that
\[
x^2+x^4+x^6+x^8
=
(x^4+x)^2+(x^2-x^3)^2.
\]
Therefore we may choose the globally defined slice-preserving polynomials
\[
h_1(x):=x^4+x,
\qquad
h_2(x):=x^2-x^3.
\]
They satisfy
\[
h_1^2+h_2^2=N(f_v),
\]
and their common divisor is exactly \(x\), so
\[
\operatorname{div}(h_1,h_2)=[0]=\cdiv(f).
\]

Fix the quaternionic subalgebra
\[
\HH_0:=\Span_{\RR}\{1,e_1,e_2,e_3\}\subset \OO,
\]
and write \(i:=e_1\), \(j:=e_2\), \(k:=e_3\).
Define
\[
f'(x):=f_0(x)+h_1(x)i+h_2(x)j
      =x^3+6+(x^4+x)i+(x^2-x^3)j.
\]
Then \(f'\) takes values in the fixed quaternionic subalgebra \(\HH_0\).

We now compare the invariants of \(f\) and \(f'\). First,
\[
\Tr(f')=2(x^3+6)=\Tr(f).
\]
Next,
\[
N(f')
=
(x^3+6)^2+h_1^2+h_2^2
=
(x^3+6)^2+N(f_v)
=
N(f).
\]
Finally,
\[
\cdiv(f')=[0]=\cdiv(f),
\]
because the common divisor of \(h_1\) and \(h_2\) is \(x\).

Therefore \(f\) and \(f'\) are strongly equivalent, and Proposition~\ref{prop:two-directions}
is realized globally in this genuinely octonionic case.

Now set
\[
v:=f'*k.
\]
Since \(ik=-j\) and \(jk=i\), we obtain
\[
v(x)=(x^2-x^3)i-(x^4+x)j+(x^3+6)k.
\]
Thus \(v\) is purely vectorial and takes values in \(\HH_0\). Moreover,
\[
N(v)=N(f')=N(f),
\]
and \(v\) has the same real and spherical zeros as \(f\), with the same multiplicities.

By Theorem~\ref{thm:purely-vectorial-model}, the purely vectorial function \(v\)
admits a model \(\widetilde v\). Since \(v\) and \(\widetilde v\) are strongly equivalent,
\(\widetilde v\) has the same real and spherical zeros as \(v\), while all its isolated
nonreal zeros lie in a single slice. Hence \(\widetilde v\) is a semi-model for the
original genuinely octonionic function \(f\).

This example shows that the two-direction reduction and the existence of semi-models
are not merely quaternionic phenomena embedded in \(\OO\), but apply also to functions
whose image is not contained in any quaternionic subalgebra.
\end{example}

\appendix

\section{A local explicit automorphism in Example~\ref{ex:octonionic-model-nonzero-trace}}\label{G2}

The explicit computation in order to get the matrix $\Psi$ appearing in this appendix were assisted by ChatGPT and then independently checked by the authors.

In Example~\ref{ex:octonionic-model-nonzero-trace} we considered the two stem functions
\[
F(z):=z^2+z^4e_1+2z^3e_2+\sqrt6\,z^2e_3+2ze_4+e_5,
\qquad
G(z):=z^2+(z^2+1)^2e_1.
\]
By Theorem~\ref{thm:strong-equivalence-sections}, there exists an intrinsic holomorphic map
\[
\phi:D\longrightarrow \Aut(\OO_\CC)
\]
such that
\[
\phi(z)\bigl(F(z)\bigr)=G(z)
\qquad \forall z\in D.
\]
In this appendix we show that, after restricting to a simply connected open set and choosing
a holomorphic square root, one can write down such an automorphism explicitly.

\subsection*{The construction of a local Cayley frame}

Set
\[
\lambda(z):=(1+z^2)^2,
\qquad
u_1(z):=\frac{1}{\lambda(z)}
\bigl(z^4e_1+2z^3e_2+\sqrt6\,z^2e_3+2ze_4+e_5\bigr).
\]
Since
\[
z^8+4z^6+6z^4+4z^2+1=(1+z^2)^4=\lambda(z)^2,
\]
one has
\[
N\bigl(u_1(z)\bigr)=1.
\]
Fix
\[
u_2:=e_6,
\]
and define
\[
u_3(z):=u_1(z)u_2
=
\frac1{\lambda(z)}
\bigl(2ze_2-e_3-2z^3e_4+\sqrt6\,z^2e_5-z^4e_7\bigr).
\]

Now let \(U\subset D\) be simply connected, and choose a holomorphic branch of
\[
\nu(z):=\sqrt{\,4z^6+6z^4+4z^2+1\,}
\]
on \(U\). Define
\[
u_4(z):=
\frac1{\lambda(z)\nu(z)}
\Bigl(
2z^5e_2-z^4e_3-2z^7e_4+\sqrt6\,z^6e_5
+\bigl(4z^6+6z^4+4z^2+1\bigr)e_7
\Bigr).
\]
Then \((u_1(z),u_2,u_4(z))\) is a local holomorphic Cayley triple. Set
\[
u_5(z):=u_1(z)u_4(z),\qquad
u_6(z):=u_4(z)u_2,\qquad
u_7(z):=u_3(z)u_4(z).
\]
Thus, for each \(z\in U\), the ordered septuple
\[
\bigl(u_1(z),u_2,u_3(z),u_4(z),u_5(z),u_6(z),u_7(z)\bigr)
\]
is a complex Cayley frame of \(\Im(\OO_\CC)\).

Let \(\Psi(z)\) be the \(7\times 7\) matrix whose columns are the coordinates of
\[
u_1(z),u_2,u_3(z),u_4(z),u_5(z),u_6(z),u_7(z)
\]
with respect to the basis \((e_1,\dots,e_7)\). Then
\[
\psi:U\longrightarrow G_2(\CC)\subset \Aut(\OO_\CC)
\]
is holomorphic. By construction,
\[
\psi(z)(e_1)=u_1(z).
\]

\subsection*{The explicit matrix}

Writing
\[
s(z):=\nu(z)=\sqrt{\,4z^6+6z^4+4z^2+1\,},
\]
the matrix of \(\Psi(z)\) in the basis \((e_1,\dots,e_7)\) is
\[
\Psi(z)=
\begin{pmatrix}
\dfrac{z^4}{\lambda} & 0 & 0 & \dfrac{s}{\lambda} & 0 & 0 & 0
\\[1.2ex]
\dfrac{2z^3}{\lambda} & 0 & \dfrac{2z}{\lambda} &
-\dfrac{2z^7}{\lambda s} &
\dfrac{\sqrt6\,z^2}{s} &
\dfrac{2z^5}{\lambda s} &
-\dfrac{1}{s}
\\[1.2ex]
\dfrac{\sqrt6\,z^2}{\lambda} & 0 & -\dfrac{1}{\lambda} &
-\dfrac{\sqrt6\,z^6}{\lambda s} &
-\dfrac{2z^3}{s} &
-\dfrac{z^4}{\lambda s} &
-\dfrac{2z}{s}
\\[1.2ex]
\dfrac{2z}{\lambda} & 0 & -\dfrac{2z^3}{\lambda} &
-\dfrac{2z^5}{\lambda s} &
\dfrac{1}{s} &
-\dfrac{2z^7}{\lambda s} &
\dfrac{\sqrt6\,z^2}{s}
\\[1.2ex]
\dfrac{1}{\lambda} & 0 & \dfrac{\sqrt6\,z^2}{\lambda} &
-\dfrac{z^4}{\lambda s} &
-\dfrac{2z}{s} &
\dfrac{\sqrt6\,z^6}{\lambda s} &
\dfrac{2z^3}{s}
\\[1.2ex]
0 & 1 & 0 & 0 & 0 & 0 & 0
\\[1.2ex]
0 & 0 & -\dfrac{z^4}{\lambda} & 0 & 0 & \dfrac{s}{\lambda} & 0
\end{pmatrix}.
\]
Since \(\Psi(z)\) is an orthogonal change of Cayley frame, one has
\[
\Phi(z):=\Psi(z)^{-1}=\Psi(z)^T .
\]
Therefore \(\Phi(z)\) is explicitly given by
\[
\Phi(z)=
\begin{pmatrix}
\dfrac{z^4}{\lambda} &
\dfrac{2z^3}{\lambda} &
\dfrac{\sqrt6\,z^2}{\lambda} &
\dfrac{2z}{\lambda} &
\dfrac{1}{\lambda} &
0 &
0
\\[1.2ex]
0&0&0&0&0&1&0
\\[1.2ex]
0 &
\dfrac{2z}{\lambda} &
-\dfrac{1}{\lambda} &
-\dfrac{2z^3}{\lambda} &
\dfrac{\sqrt6\,z^2}{\lambda} &
0 &
-\dfrac{z^4}{\lambda}
\\[1.2ex]
\dfrac{s}{\lambda} &
-\dfrac{2z^7}{\lambda s} &
-\dfrac{\sqrt6\,z^6}{\lambda s} &
-\dfrac{2z^5}{\lambda s} &
-\dfrac{z^4}{\lambda s} &
0 &
0
\\[1.2ex]
0 &
\dfrac{\sqrt6\,z^2}{s} &
-\dfrac{2z^3}{s} &
\dfrac{1}{s} &
-\dfrac{2z}{s} &
0 &
0
\\[1.2ex]
0 &
\dfrac{2z^5}{\lambda s} &
-\dfrac{z^4}{\lambda s} &
-\dfrac{2z^7}{\lambda s} &
\dfrac{\sqrt6\,z^6}{\lambda s} &
0 &
\dfrac{s}{\lambda}
\\[1.2ex]
0 &
-\dfrac{1}{s} &
-\dfrac{2z}{s} &
\dfrac{\sqrt6\,z^2}{s} &
\dfrac{2z^3}{s} &
0 &
0
\end{pmatrix}.
\]

\subsection*{Verification that \(\Psi(z)(G(z))=F(z)\)}

Since every algebra automorphism fixes the scalar unit \(1\), one has
\[
\psi(z)\bigl(G(z)\bigr)
=
\Psi(z)\bigl(z^2+\lambda(z)e_1\bigr)
=
z^2+\lambda(z)\Psi(z)(e_1).
\]
By construction,
\[
\Psi(z)(e_1)=u_1(z)
=
\frac{1}{\lambda(z)}
\bigl(z^4e_1+2z^3e_2+\sqrt6\,z^2e_3+2ze_4+e_5\bigr).
\]
Hence
\[
\psi(z)\bigl(G(z)\bigr)
=
z^2+\lambda(z)u_1(z)
=
z^2+z^4e_1+2z^3e_2+\sqrt6\,z^2e_3+2ze_4+e_5
=
F(z).
\]
Therefore
\[
\psi(z)\bigl(G(z)\bigr)=F(z),
\qquad\text{and consequently}\qquad
\phi(z)\bigl(F(z)\bigr)=G(z).
\]

Equivalently, if one writes the vector part of \(F(z)\) as the column vector
\[
\mathbf f(z)=
\begin{pmatrix}
z^4\\ 2z^3\\ \sqrt6\,z^2\\ 2z\\ 1\\ 0\\ 0
\end{pmatrix},
\]
then
\[
\Phi(z)\,\mathbf f(z)
=
\begin{pmatrix}
(1+z^2)^2\\ 0\\ 0\\ 0\\ 0\\ 0\\ 0
\end{pmatrix},
\]
which is precisely the vector part of \(G(z)\).

\begin{remark}
The above construction is local, because it depends on the choice of a holomorphic branch
of
\[
\sqrt{\,4z^6+6z^4+4z^2+1\,}.
\]
Thus the matrix formula for \(\Phi\) is valid on any simply connected open set
\(U\subset D\) where such a branch is fixed and does not vanish. The existence of a
global intrinsic holomorphic automorphism on the whole base is guaranteed abstractly by
Theorem~\ref{thm:strong-equivalence-sections}; the point of this appendix is only to show
that the automorphism can also be written down explicitly, at least locally.
\end{remark}

\section{Direct computation of the zero set described in Example~\ref{ex:no-CI-preserving-representative}}\label{appendix:computation}

In this appendix we verify by direct computation that the quaternions
\[
x_1=\frac85\,i+\frac65\,j,
\qquad
x_2=\frac95\,i+\frac{144}{65}\,j+\frac{12}{13}\,k
\]
are zeros of the polynomial
\(f(x)=x^3-x^2(i+2j+3k)+x(6i-3j+2k)+6\).

\subsection*{The first zero}

Since \(x_1\) is purely imaginary, its square is minus its squared Euclidean norm:
\[
x_1^2
=\left(\frac85 i+\frac65 j\right)^2
=-\left(\frac85\right)^2-\left(\frac65\right)^2
=-\frac{64}{25}-\frac{36}{25}
=-4.
\]
Therefore
\[
x_1^3=x_1x_1^2=-4x_1=-\frac{32}{5}i-\frac{24}{5}j.
\]
Moreover,
\[
-x_1^2(i+2j+3k)=4(i+2j+3k)=4i+8j+12k.
\]
For the linear term we compute
\[
x_1(6i-3j+2k)
=\left(\frac85 i+\frac65 j\right)(6i-3j+2k).
\]
Using the quaternionic multiplication rules, we obtain
\[
\frac85 i\cdot 6i=-\frac{48}{5},
\qquad
\frac85 i\cdot (-3j)=-\frac{24}{5}k,
\qquad
\frac85 i\cdot 2k=-\frac{16}{5}j,
\]
and
\[
\frac65 j\cdot 6i=-\frac{36}{5}k,
\qquad
\frac65 j\cdot (-3j)=\frac{18}{5},
\qquad
\frac65 j\cdot 2k=\frac{12}{5}i.
\]
Hence
\[
x_1(6i-3j+2k)
=-6+\frac{12}{5}i-\frac{16}{5}j-12k.
\]
Putting everything together,
\[
\begin{aligned}
f(x_1)
&=x_1^3-x_1^2(i+2j+3k)+x_1(6i-3j+2k)+6\\
&=\left(-\frac{32}{5}i-\frac{24}{5}j\right)
+(4i+8j+12k)
+\left(-6+\frac{12}{5}i-\frac{16}{5}j-12k\right)+6\\
&=0.
\end{aligned}
\]

\subsection*{The second zero}

Again \(x_2\) is purely imaginary, and its squared norm is
\[
\left(\frac95\right)^2+\left(\frac{144}{65}\right)^2+\left(\frac{12}{13}\right)^2
=\frac{81}{25}+\frac{20736}{4225}+\frac{144}{169}
=9.
\]
Hence
\[
x_2^2=-9,
\qquad
x_2^3=x_2x_2^2=-9x_2
=-\frac{81}{5}i-\frac{1296}{65}j-\frac{108}{13}k.
\]
Also,
\[
-x_2^2(i+2j+3k)=9(i+2j+3k)=9i+18j+27k.
\]
For the linear term we write
\[
x_2(6i-3j+2k)
=\left(\frac95 i+\frac{144}{65}j+\frac{12}{13}k\right)(6i-3j+2k).
\]
Expanding term by term gives
\[
\frac95 i\cdot 6i=-\frac{54}{5},
\qquad
\frac95 i\cdot (-3j)=-\frac{27}{5}k,
\qquad
\frac95 i\cdot 2k=-\frac{18}{5}j,
\]
\[
\frac{144}{65}j\cdot 6i=-\frac{864}{65}k,
\qquad
\frac{144}{65}j\cdot (-3j)=\frac{432}{65},
\qquad
\frac{144}{65}j\cdot 2k=\frac{288}{65}i,
\]
\[
\frac{12}{13}k\cdot 6i=\frac{72}{13}j,
\qquad
\frac{12}{13}k\cdot (-3j)=\frac{36}{13}i,
\qquad
\frac{12}{13}k\cdot 2k=-\frac{24}{13}.
\]
Summing separately the scalar and vector parts, we obtain
\[
-\frac{54}{5}+\frac{432}{65}-\frac{24}{13}=-6,
\]
\[
\frac{288}{65}+\frac{36}{13}
=\frac{288}{65}+\frac{180}{65}
=\frac{468}{65}
=\frac{36}{5},
\]
\[
-\frac{18}{5}+\frac{72}{13}
=-\frac{234}{65}+\frac{360}{65}
=\frac{126}{65},
\]
\[
-\frac{27}{5}-\frac{864}{65}
=-\frac{351}{65}-\frac{864}{65}
=-\frac{1215}{65}
=-\frac{243}{13}.
\]
Therefore
\[
x_2(6i-3j+2k)
=-6+\frac{36}{5}i+\frac{126}{65}j-\frac{243}{13}k.
\]
Consequently,
\[
\begin{aligned}
f(x_2)
&=x_2^3-x_2^2(i+2j+3k)+x_2(6i-3j+2k)+6\\
&=\left(-\frac{81}{5}i-\frac{1296}{65}j-\frac{108}{13}k\right)
+(9i+18j+27k)
+\left(-6+\frac{36}{5}i+\frac{126}{65}j-\frac{243}{13}k\right)+6\\
&=0.
\end{aligned}
\]

We conclude that both \(x_1\) and \(x_2\) are roots of \(f\).

\end{document}